\newtheorem{thm}{Theorem}
\theoremstyle{definition}
\newtheorem{defin}{Definition}
\newtheorem{remark}{Remark}
\newtheorem{lemma}{Lemma}
\newtheorem{cor}{Corollary}
\newtheorem{assumption}{Assumption}
\newcommand{\eps}{\varepsilon}
\newcommand{\R}{\mathbb{R}}
\newcommand{\pr}{\mathrm{P}}
\newcommand{\E}{\mathrm{E}}
\newcommand{\la}{\lambda}
\begin{document}

\begin{frontmatter}

\pretitle{Research Article}

\title{Long-time behavior of a non-autonomous stochastic predator-prey model with jumps}

\author[a]{\inits{Olg.}\fnms{Olga}~\snm{Borysenko}\thanksref{}\ead[label=e1]{olga\_borisenko@ukr.net}\orcid{0000-0000-0000-0000}}

\author[b]{\inits{O.}\fnms{Oleksandr}~\snm{Borysenko}\thanksref{cor1}\ead[label=e2]{odb@univ.kiev.ua}}
\thankstext[id=cor1]{Corresponding author.}
\address[a]{\institution{Department of Mathematical Physics,
National Technical University of Ukraine}, 37, Prosp.Peremohy, Kyiv, 03056, \cny{ Ukraine
}}

\address[b]{\institution{Department of Probability Theory, Statistics and Actuarial
Mathematics, Taras Shevchenko National University of Kyiv , Ukraine}, 64 Volodymyrska Str., Kyiv, 01601 \cny{Ukraine}}

\begin{abstract}
It is proved the existence and uniqueness of the global positive solution to the system of stochastic differential equations describing a non-autonomous stochastic predator-prey model with modified version of Leslie-Gower term and Holling-type II functional response disturbed by white noise, centered and non-centered Poisson noises. We obtain sufficient conditions of stochastic ultimate boundedness, stochastic permanence, non-persistence in the mean, weak persistence in the mean and extinction of the solution to the considered system.
\end{abstract}

\begin{keywords}
\kwd{Stochastic Predator-Prey Model}
\kwd{Leslie-Gower and Holling-type II functional response}
\kwd{Global Solution}
\kwd{Stochastic Ultimate Boundedness}
\kwd{Stochastic Permanence}
\kwd{Extinction}
\kwd{Non-Persistence in the Mean}
\kwd{Weak Persistence in the Mean}
\end{keywords}

\begin{keywords}[MSC2010]%
\kwd{92D25}
\kwd{60H10}
\kwd{60H30}
\end{keywords}

\end{frontmatter}

\section{Introduction}\label{}

The deterministic predator-prey model with modified version of Leslie-Gower term and Holling-type II functional response is studied in \cite{Aziz}. This model has a form
\begin{align}\label{eq1}
dx_1(t)=x_1(t)\left(a-bx_1(t)-\frac{cx_2(t)}{m_1+x_1(t)}\right)dt,\nonumber \\
dx_2(t)=x_2(t)\left(r-\frac{fx_2(t)}{m_2+x_1(t)}\right)dt,\end{align}
where $x_1(t)$ and $x_2(t)$ are the prey and predator population densities at time $t$, respectively. Positive constants $a,b,c,r,f,m_1,m_2$ defined as follows: $a$ is the growth rate of prey $x_1$; $b$ measures the strength of competition among individuals of species $x_1$; $c$ is the maximum value of the per capita reduction rate of $x_1$ due to $x_2$; $m_1$ and $m_2$ measure the extent to which the environment provides protection to prey $x_1$ and to the predator $x_2$, respectively; $r$ is the growth rate of predator $x_2$, and $f$ has a similar meaning to $c$. In \cite{Aziz} the authors study boundedness and global stability of the positive equilibrium of the model $(\ref{eq1})$.

In the papers \cite{Ji1}, \cite{Ji2}, \cite{Lin}  it is considered the stochastic version of model $(\ref{eq1})$ in the following form
\begin{align}\label{eq2}
dx_1(t)=x_1(t)\left(a-bx_1(t)-\frac{cx_2(t)}{m_1+x_1(t)}\right)dt+\alpha x_1(t)dw_1(t),\nonumber\\
dx_2(t)=x_2(t)\left(r-\frac{fx_2(t)}{m_2+x_1(t)}\right)dt+\beta x_2(t)dw_2(t),\end{align}
where $w_1(t)$ and $w_2(t)$ are mutually independent Wiener processes in \cite{Ji1}, \cite{Ji2}, and processes $w_1(t),w_2(t)$ are correlated in \cite{Lin}. In \cite{Ji1}  the authors proved that there is a unique positive solution to the system $(\ref{eq2})$, obtaining the sufficient conditions for extinction and persistence in the mean of predator and prey. In \cite{Ji2} it is shown that, under appropriate conditions, there is a stationary distribution of the solution to the system $(\ref{eq2})$ which is ergodic. In \cite{Lin} the authors prove that the densities of the distributions of the solution to the system $(\ref{eq2})$ can converges in $L^1$ to an invariant density or can converge weakly to a singular measure under appropriate conditions.

Population systems may suffer abrupt environmental perturbations,
such as epidemics, fires, earthquakes, etc. So it is natural to introduce Poisson noises into the population model for
describing such discontinuous systems.

In this paper, we consider the non-autonomous predator-prey model with modified version of Leslie-Gower term and Holling-type II functional response, disturbed by white noise and jumps generated by centered and non-centered Poisson measures. So, we take into account not only ``small'' jumps, corresponding to the centered Poisson measure, but also the ``large'' jumps, corresponding to the non-centered Poisson measure. This model is driven by the system of stochastic differential equations
\begin{align}\label{eq3}
dx_i(t)=x_i(t)\left[a_{i}(t)-b_{i}(t)x_{i}(t)-\frac{c_{i}(t)x_2(t)}{m(t)+x_{1}(t)}\right]dt+\sigma_i(t)x_i(t)dw_i(t)\nonumber\\
+\int\limits_{\mathbb{R}}\gamma_i(t,z)x_i(t)\tilde\nu_1(dt,dz)+\int\limits_{\mathbb{R}}\delta_i(t,z)x_i(t)\nu_2(dt,dz),\nonumber\\ x_i(0)=x_{i0}>0,\ i=1,2.
\end{align}
where $x_1(t)$ and $x_2(t)$ are the prey and predator population densities at time $t$, respectively, $b_2(t)\equiv0$, $w_i(t), i=1,2$ are independent standard one-dimensional Wiener processes, $\nu_i(t,A), i=1,2$ are independent Poisson measures, which are independent on $w_i(t),i=1,2$, $\tilde\nu_1(t,A)=\nu_1(t,A)-t\Pi_1(A)$,  $E[\nu_i(t,A)]=t\Pi_i(A),i=1,2$, $\Pi_i(A), i=1,2$ are finite measures on the Borel sets $A$ in $\mathbb{R}$.

To the best of our knowledge, there have been no papers devoted to the dynamical properties of the stochastic predator-prey model $(\ref{eq3})$, even in the case of centered Poisson noise. It is worth noting that the impact of centered and non-centered Poisson noises to the stochastic non-autonomous logistic model and to the stochastic two-species mutualism model is studied in the papers \cite{Bor1} -- \cite{Bor3}.

In the following we will use the notations $X(t)=(x_1(t),x_2(t))$, $X_0=(x_{10},x_{20})$, $|X(t)|=\sqrt{x_1^2(t)+x_2^2(t)}$, $\R^2_{+}=\{X\in\R^2:\ x_1>0,x_2>0\}$,
\begin{align}\alpha_i(t)=a_{i}(t)+\int_{\R}\delta_{i}(t,z)\Pi_2(dz),\nonumber\\
\beta_i(t)\!=\!\frac{\sigma^2_i(t)}{2}\!+\!\!\!\int\limits_{\R}[\gamma_i(t,z)\!-\!\ln(1\!+\!\gamma_i(t,z))]\Pi_1(dz)\!
-\!\!\!\int\limits_{\R}\ln(1\!+\!\delta_i(t,z))\Pi_2(dz),
\nonumber
\end{align} $i=1,2$. For bounded, continuous functions $f_i(t), t\in[0,+\infty), i=1,2$, let us denote
\begin{align}f_{i\sup}=\sup_{t\ge0}f_i(t), f_{i\inf}=\inf_{t\ge0}f_i(t), i=1,2,\nonumber\\ f_{\max}=\max\{f_{1\sup},f_{2\sup}\}, f_{\min}=\min\{f_{1\inf},f_{2\inf}\}.\nonumber\end{align}

We prove that system $(\ref{eq3})$ has a unique, positive, global (no explosion
in a finite time) solution for any positive initial value, and that this solution is stochastically ultimately bounded. The sufficient conditions for stochastic permanence, non-persistence in the mean, weak persistence in the mean and extinction of solution are derived.

The rest of this paper is organized as follows. In Section 2, we prove the existence of the unique global positive solution to the system $(\ref{eq3})$ and derive some auxiliary results. In Section 3, we prove the stochastic ultimate boundedness of the solution to the system $(\ref{eq3})$, obtainig conditions under which the solution is stochastically permanent. The sufficient conditions for non-persistence in the mean, weak persistence in the mean and extinction of the solution are derived.

\section{Existence of global solution and some auxiliary lemmas}

Let $(\Omega,{\cal F},\pr)$ be a probability space, $w_i(t), i=1,2, t\ge0$ are independent standard one-dimensional Wiener processes on $(\Omega,{\cal F},\pr)$, and $\nu_i(t,A), i=1,2$ are independent Poisson measures defined on $(\Omega,{\cal F},\pr)$ independent on $w_i(t), i=1,2$. Here $\E[\nu_i(t,A)]=t\Pi_i(A), i=1,2$, $\tilde\nu_i(t,A)=\nu_i(t,A)-t\Pi_i(A), i=1,2$, $\Pi_i(\cdot), i=1,2$ are finite measures on the Borel sets in $\mathbb{R}$. On the probability space $(\Omega,{\cal F},\pr)$ we consider an increasing, right continuous family of complete sub-$\sigma$-algebras $\{{\cal F}_{t}\}_{t\ge0}$, where ${\cal F}_{t}=\sigma\{w_i(s),\nu_i(s,A), s\le t,i=1,2\}$.

We need the following assumption.

\begin{assumption} \label{ass1} It is assumed, that $a_{i}(t), b_1(t)$, $c_i(t), \sigma_i(t), \gamma_{i}(t,z), \delta_{i}(t,z), i=1,2$, $m(t)$ are bounded, continuous on $t$ functions, $a_{i}(t)>0,i=1,2$, $b_{1\inf}>0$, $c_{i\inf}>0, i=1,2$, $m_{\inf}=\inf_{t\ge0}m(t)>0$, and $\ln(1+\gamma_i(t,z)),\ln(1+\delta_i(t,z)), i=1,2$ are bounded, $\Pi_i(\R)<\infty, i=1,2$.\end{assumption}

In what follows we will assume that Assumption \ref{ass1} holds.
\begin{thm} \label{thm1}
There exists a unique global solution $X(t)$ to the system $(\ref{eq3})$ for any initial value $X(0)=X_0\in \R^2_{+}$, and
$\pr\{X(t)\in \R^2_{+}\}=1$.
\end{thm}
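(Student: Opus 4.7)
The plan is to follow the classical Khasminskii-type localization argument, adapted to SDEs with both centered and non-centered Poisson jumps. Since on $\R^2_+$ the coefficients of system $(\ref{eq3})$ are locally Lipschitz (the only potentially singular term, $x_1 x_2/(m(t)+x_1)$, is smooth on $\R^2_+$ because $m_{\inf}>0$), standard SDE theory with jumps guarantees the existence of a unique local positive solution on $[0,\tau_e)$, where $\tau_e$ is the explosion time. I would then introduce, for large enough $k_0$ so that $X_0\in(1/k_0,k_0)^2$, the stopping times $\tau_k=\inf\{t\in[0,\tau_e):\ x_1(t)\notin(1/k,k)\text{ or }x_2(t)\notin(1/k,k)\}$ and set $\tau_\infty=\lim_{k\to\infty}\tau_k\le\tau_e$. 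The theorem reduces to showing $\tau_\infty=\infty$ a.s.

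Next I would introduce the Lyapunov function $V(X)=(x_1-1-\ln x_1)+(x_2-1-\ln x_2)$, which is nonnegative on $\R^2_+$ and blows up as any coordinate approaches $0$ or $+\infty$. Applying It\^o's formula for semimartingales with jumps to $V(X(t))$ yields
\begin{align*}
dV(X(t))=\mathcal{L}V(X(t))\,dt+\sum_{i=1}^{2}\sigma_i(t)(x_i(t)-1)\,dw_i(t)\\
+\sum_{i=1}^{2}\int_{\R}[\gamma_i(t,z)x_i(t)-\ln(1+\gamma_i(t,z))]\,\tilde\nu_1(dt,dz)\\
+\sum_{i=1}^{2}\int_{\R}[\delta_i(t,z)x_i(t)-\ln(1+\delta_i(t,z))]\,\tilde\nu_2(dt,dz),
\end{align*}
where $\mathcal{L}V$ collects the drift contributions and the compensator corrections. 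Expanding $\mathcal{L}V$, the key point is that the quadratic term $-b_1(t)x_1^2$ (with $b_{1\inf}>0$) dominates the positive linear terms in $x_1$, and in the predator equation the term $-c_2(t)x_2^2/(m(t)+x_1(t))\le 0$ combined with $-(c_2(t)x_2)/(m(t)+x_1(t))\cdot(-1)$ gives an upper bound linear in $x_2$ via $1/(m(t)+x_1)\le 1/m_{\inf}$. Using the boundedness of $a_i,\sigma_i,c_i$, the boundedness of $\ln(1+\gamma_i)$ and $\ln(1+\delta_i)$, and the finiteness of $\Pi_i(\R)$, one obtains a deterministic constant $K$ such that $\mathcal{L}V(X)\le K(1+V(X))$ for all $X\in\R^2_+$.

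Taking expectations after localizing at $\tau_k\wedge T$ (which kills the martingale parts) and applying Gronwall's inequality gives $\E V(X(t\wedge\tau_k))\le(V(X_0)+KT)e^{KT}$ for all $t\le T$. On the event $\{\tau_k\le T\}$ at least one coordinate of $X(\tau_k)$ equals $k$ or $1/k$, whence $V(X(\tau_k))\ge\min\{k-1-\ln k,\ 1/k-1+\ln k\}\to\infty$ as $k\to\infty$. This forces $\pr\{\tau_k\le T\}\to 0$, hence $\tau_\infty\ge T$ a.s. for every $T$, and therefore $\tau_\infty=\infty$ a.s. Positivity is preserved because $V$ explodes at the boundary, so the solution never touches $\{x_1=0\}\cup\{x_2=0\}$.

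The main obstacle I anticipate is the bookkeeping for the jump integrals: one must verify that the compensator correction for the non-centered Poisson integral (written as $\int\delta_i x_i\,\tilde\nu_2+\int\delta_i x_i\Pi_2(dz)\,dt$) produces, after passing to $\ln(1+\delta_i)$ via the jump part of It\^o's formula, a contribution that is linear in $x_i$ and hence absorbed into the bound $K(1+V)$; here the standing assumption that $\ln(1+\delta_i(t,z))$ is bounded and $\Pi_2(\R)<\infty$ is what makes the estimate go through. Apart from this, the argument is the standard Mao/Bao localization scheme.
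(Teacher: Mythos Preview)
Your proposal is correct and follows essentially the same Khasminskii--type localization argument as the paper, with the same Lyapunov function $V=\sum_i k_i(x_i-1-\ln x_i)$, the same stopping times $\tau_k$, and the same contradiction at the end. The only noteworthy difference is that the paper introduces weights $k_1,k_2$ and, when $\alpha_{2\sup}>0$, sets $k_2=k_1 c_{1\inf}/\alpha_{2\sup}$ so that the cross term $(k_2\alpha_2-k_1c_1)\,x_1x_2/(m+x_1)$ vanishes and one obtains the sharper bound $\mathcal L V\le L$ for a \emph{constant} $L$ (no Gronwall needed); your unweighted choice gives only $\mathcal L V\le K(1+V)$---because the damping $-c_2x_2^2/(m+x_1)$ degenerates as $x_1\to\infty$ and leaves a term linear in $x_2$---but since $x_2\le 2(x_2-1-\ln x_2)+2\le 2V+2$, this linear term is indeed absorbed by $K(1+V)$ and Gronwall closes the argument just as well.
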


\begin{proof}
Let us consider the system of stochastic differential equations
\begin{align}\label{eq4}
d\xi_i(t)=\left[a_{i}(t)-b_{i}(t)\exp\{\xi_{i}(t)\}-\frac{c_{i}(t)\exp\{\xi_{2}(t)\}}{m(t)+\exp\{\xi_1(t)\}}-\beta_i(t)\right]dt\nonumber\\+\sigma_i(t)dw_i(t)
+\int\limits_{\mathbb{R}}\ln(1+\gamma_i(t,z))\tilde\nu_1(dt,dz)+\int\limits_{\mathbb{R}}\ln(1+\delta_i(t,z))\tilde\nu_2(dt,dz),\nonumber\\ v_i(0)=\ln x_{i0},\ i=1,2.
\end{align}
The coefficients of the system $(\ref{eq4})$ are local Lipschitz continuous. So, for any initial value $(\xi_1(0),\xi_2(0))$ there exists a unique local solution $\Xi(t)=(\xi_1(t),\xi_2(t))$ on $[0,\tau_{e})$, where $\sup_{t<\tau_{e}}|\Xi(t)|=+\infty$ (cf. Theorem 6, p.246, \cite{GikhSkor}). Therefore, from the It\^{o} formula we derive that the process $X(t)=(\exp\{\xi_1(t)\},\linebreak\exp\{\xi_2(t)\})$ is a unique, positive local solution to the system (\ref{eq3}). To show this solution is global, we need to show that $\tau_{e}=+\infty$ a.s. Let $n_0\in\mathbb{N}$ be sufficiently large for $x_{i0}\in[1/n_0,n_0], i=1,2$. For any $n\ge n_0$ we define the stopping time
\begin{align}
\tau_n=\inf\left\{t\in[0,\tau_e):\ X(t)\notin\left(\frac{1}{n},n\right)\times\left(\frac{1}{n},n\right)\right\}.\nonumber
\end{align}
 It is easy to see that $\tau_n$ is increasing as $n\to+\infty$. Denote $\tau_{\infty}=\lim_{n\to\infty}\tau_n$, whence $\tau_{\infty}\le\tau_e$ a.s. If we prove that $\tau_\infty=\infty$ a.s., then $\tau_e=\infty$ a.s. and $X(t)\in \R^2_{+}$ a.s. for all $t\in[0,+\infty)$. So we need to show that $\tau_\infty=\infty$ a.s. If it is not true, there are constants $T>0$ and $\varepsilon\in(0,1)$, such that $\pr\{\tau_{\infty}<T\}>\varepsilon$. Hence, there is $n_1\ge n_0$ such that
\begin{align}\label{eq5}\pr\{\tau_{n}<T\}>\varepsilon,\quad \forall n\ge n_1.\end{align}
For the non-negative function $V(X)=\sum\limits_{i=1}^2 k_{i}(x_i-1-\ln x_i)$, $X=(x_1,x_2)$, $x_i>0$, $k_{i}>0$, $i=1,2$
by the It\^{o} formula we obtain
\begin{align}\label{eq6}
dV(X(t))=\sum_{i=1}^2 k_i \left\{\rule{0pt}{20pt}(x_i(t)-1)\left[a_i(t)-b_i(t)x_i(t)-\frac{c_{i}(t)x_2(t)}{m(t)+x_{1}(t)}\right]\right.\nonumber\\ \left.+\beta_i(t)+\int\limits_{\R}\delta_i(t,z)x_i(t)\Pi_2(dz)\right\}dt+\sum_{i=1}^2 k_i\left\{\rule{0pt}{20pt}(x_i(t)-1)\sigma_i(t)dw_i(t)\right.\nonumber\\
+\int\limits_{\R}\![\gamma_i(t,z)x_i(t)-\ln(1+\gamma_i(t,z))]\tilde\nu_1(dt,dz)\nonumber\\ \left.+
\int\limits_{\R}[\delta_i(t,z)x_i(t)-\ln(1+\delta_i(t,z))]\tilde\nu_2(dt,dz)\right\}.
\end{align}
Let us consider the function $f(t,x_1,x_2)=\phi(t,x_1)+\psi(t,x_1,x_2)$,  $x_1>0,\linebreak x_2>0$ where
\begin{align}
\phi(t,x_1)=-k_1 b_1(t)x_1^2+k_1\left(\rule{0pt}{12pt}\alpha_1(t)+b_1(t)\right)x_1+k_1\beta_1(t)+k_2\beta_2(t)\nonumber\\-k_1 a_1(t)-k_2a_2(t),\nonumber\\
\psi(t,x_1,x_2)=(m(t)+x_1)^{-1}\left[\rule{0pt}{12pt}-k_2c_2(t)x_2^2+\left(\rule{0pt}{12pt}k_2\alpha_2(t)-k_1 c_1(t)\right)x_1x_2\right.\nonumber\\ \left.+\left(\rule{0pt}{12pt}k_2\alpha_2(t)m(t)+k_1c_1(t)+k_2c_2(t)\right)x_2\rule{0pt}{12pt}\right].\nonumber
\end{align}
Under Assumption \ref{ass1} there is a constant $L_1(k_1,k_2)>0$, such that
\begin{align}
\phi(t,x_1)\le k_1\left[\rule{0pt}{12pt}-b_{1\inf}x_1^2+\left(\alpha_{1\sup}+b_{1\sup}\right)x_1\right]+\beta_{\max}(k_1+k_2)\le L_1(k_1,k_2).\nonumber
\end{align}
If $\alpha_{2\sup}\le0$, then for the function $\psi(t,x_1,x_2)$ we have
\begin{align}
\psi(t,x_1,x_2)\le \frac{-k_2 c_{2\inf}x_2^2+(k_1+k_2)c_{\max}x_2}{m(t)+x_1}\le L_2(k_1,k_2).\nonumber
\end{align}
If $\alpha_{2\sup}>0$, then for $k_2=k_1\frac{c_{1\inf}}{\alpha_{2\sup}}$ there is a constant $L_3(k_1,k_2)>0$, such that
\begin{align}
\psi(t,x_1,x_2)\le \left\{\rule{0pt}{12pt}-k_2 c_{2\inf}x_2^2+(k_2\alpha_{2\sup}-k_1 c_{1\inf})x_1x_2+\left[\rule{0pt}{12pt}k_2\alpha_{2\sup}m_{\sup}\right.\right.\nonumber\\ +\left.\left.\rule{0pt}{12pt}(k_1+k_2)c_{\max}\right]x_2\right\}(m(t)+x_1)^{-1}=
\frac{k_1}{m(t)+x_1}\left\{-\frac{c_{1\inf}c_{2\inf}}{\alpha_{2\sup}}x_2^2\right.\nonumber\\
\left.+\left[\rule{0pt}{14pt}c_{1\inf}m_{\sup}+\left(1+\frac{c_{1\inf}}{\alpha_{2\sup}}\right)c_{\max}\right]x_2\right\}\le L_3(k_1,k_2).\nonumber
\end{align}
Therefore there is a constant $L(k_1,k_2)>0$, such that $f(t,x_1,x_2)\le L(k_1,k_2)$.
So from $(\ref{eq6})$ we obtain by integrating
\begin{align}\label{eq7}
V(X(T\wedge\tau_n))\le V(X_0)+L(k_1,k_2)(T\wedge\tau_n)\nonumber\\+\sum_{i=1}^2k_{i}\left\{\int\limits_0^{T\wedge\tau_n}(x_i(t)-1)\sigma_i(t)dw_i(t)+
\int\limits_0^{T\wedge\tau_n}\!\!\!\int\limits_{\R}\left[\gamma_i(t,z)x_i(t)-\ln(1\right.\right.\nonumber\\ \left.\left.+\gamma_i(t,z))\right]\tilde\nu_1(dt,dz)+
\int\limits_0^{T\wedge\tau_n}\!\!\!\int\limits_{\R}[\delta_i(t,z)x_i(t)-\ln(1+\delta_i(t,z))]\tilde\nu_2(dt,dz)\right\}.
\end{align}
Taking expectations we derive from $(\ref{eq7})$
\begin{align}\label{eq8}
\E\left[V(X(T\wedge\tau_n))\right]\le V(X_0)+L(k_1,k_2)T.
\end{align}
Set $\Omega_n=\{\tau_n\le T\}$ for $n\ge n_1$. Then by $(\ref{eq5})$, $\pr(\Omega_n)=\pr\{\tau_n\le T\}>\varepsilon$, $\forall n\ge n_1$. Note that for every $\omega\in\Omega_n$ there is at least one of $x_1(\tau_n,\omega)$ and $x_2(\tau_n,\omega)$ equals either $n$ or $1/n$. So
\begin{align}
V(X(\tau_n))\ge\min\{k_1,k_2\} \min\{n-1-\ln n,\frac{1}{n}-1+\ln n\}.\nonumber
\end{align}
From $(\ref{eq8})$ it follows 
\begin{align}
V(X_0)+L(k_1,k_2)T\ge \E[\mathbf{1}_{\Omega_n}V(X(\tau_n))]\nonumber\\ \ge\varepsilon\min\{k_1,k_2\}\min\{n-1-\ln n,\frac{1}{n}-1+\ln n\},\nonumber
\end{align}
where $\mathbf{1}_{\Omega_n}$ is the indicator function of $\Omega_n$. Letting $n\to\infty$ leads to the contradiction $\infty>V(X_0)+L(k_1,k_2)T=\infty$. This completes the proof of the theorem.
\end{proof}

\begin{lemma}\label{lm1}
The density of prey population $x_1(t)$ obeys 
\begin{align}
\limsup_{t\to\infty}\frac{\ln(m+x_1(t))}{t}\le0,\ \forall m>0, \qquad\hbox{a.s.}
\end{align}
\end{lemma}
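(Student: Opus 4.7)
The plan is to first establish a uniform-in-time moment bound $\sup_{t\ge 0}\E[(m+x_1(t))^p]<\infty$ for a sufficiently small $p\in(0,1)$, and then upgrade this estimate to the almost sure $\limsup$ bound via a Chebyshev--Borel--Cantelli argument on unit time intervals.

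For the moment bound, I would apply It\^o's formula to $(m+x_1(t))^p$ and immediately discard the nonpositive predator contribution $-p c_1(t) x_1 x_2(m+x_1)^{p-1}/(m(t)+x_1)\le 0$; this is what decouples the prey estimate from $x_2$. The continuous drift reduces to $p(m+x_1)^{p-1}x_1[a_1(t)-b_1(t)x_1]+\tfrac12 p(p-1)\sigma_1^2(t)x_1^2(m+x_1)^{p-2}$, and its leading term $-pb_{1\inf}x_1^2(m+x_1)^{p-1}\sim -pb_{1\inf}x_1^{p+1}$ grows superlinearly in $(m+x_1)^p$ as $x_1\to\infty$. By Assumption~\ref{ass1} the jump amplitudes $\gamma_1(t,z)$ and $\delta_1(t,z)$ are bounded and bounded away from $-1$, so the two compensator integrals admit elementary estimates of the form $|(1+u)^p-1-pu|\le C_p u^2$ and $|(1+u)^p-1|\le C_p|u|$ applied with $u=\gamma_1 x_1/(m+x_1)$ (resp.\ $u=\delta_1 x_1/(m+x_1)$); after integrating against the finite measures $\Pi_1,\Pi_2$ they are dominated by a constant multiple of $(m+x_1)^p$. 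Consequently there exist $\lambda>0$ and $K>0$ such that the drift of $e^{\lambda t}(m+x_1(t))^p$ is pointwise $\le Ke^{\lambda t}$ along the solution. Localizing by the stopping times $\tau_n$ from Theorem~\ref{thm1}, taking expectations and letting $n\to\infty$ (justified by Fatou and the positivity of $x_1$) then yield $\E[(m+x_1(t))^p]\le (m+x_{10})^p+K/\lambda$, independently of $t\ge 0$.

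To promote this to the almost sure conclusion, I would apply Doob's $L^2$ inequality to the Brownian martingale part of $(m+x_1(t))^p$ and Kunita's $L^p$ inequality to the two compensated-Poisson martingale parts; combined with the moment bound just obtained, they furnish $\E[\sup_{n\le t\le n+1}(m+x_1(t))^p]\le K'$ with $K'$ independent of $n\in\mathbb{N}$. For any $\eps>0$, Chebyshev's inequality gives $\pr\{\sup_{n\le t\le n+1}(m+x_1(t))>e^{\eps n}\}\le K'e^{-p\eps n}$, which is summable in $n$. Borel--Cantelli yields $\limsup_{t\to\infty}\ln(m+x_1(t))/t\le\eps$ a.s., and letting $\eps\downarrow 0$ along a countable sequence finishes the argument; the dependence on $m$ then collapses to a single convenient value thanks to $\ln(m+x_1)\le\ln\max(1,m)+\ln(1+x_1)$.

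The main obstacle I anticipate is the careful handling of the jump compensators: one must verify that $u=\gamma_1 x_1/(m+x_1)$ and $u=\delta_1 x_1/(m+x_1)$ stay in a compact subset of $(-1,\infty)$ uniformly in $t,x_1,z$, which is exactly what Assumption~\ref{ass1} guarantees through the boundedness of $\ln(1+\gamma_1)$ and $\ln(1+\delta_1)$. Once these uniform Taylor-type estimates are in hand, the dominance of $-pb_{1\inf}x_1^{p+1}$ over all the positive $O((m+x_1)^p)$ contributions is automatic for any $p\in(0,1)$, and the pathwise inequality on the drift used above follows with no further conditions on $\lambda$.
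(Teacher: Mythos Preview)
Your argument is correct, but it follows a genuinely different route from the paper's. The paper applies the It\^o formula directly to $e^{t}\ln(m+x_1(t))$ and then controls the three stochastic integrals simultaneously via an exponential martingale inequality for jump processes (quoted from \cite{Bor2}); with the choices $T=k\tau$, $\kappa=\varepsilon e^{-k\tau}$, $\beta=\theta e^{k\tau}\varepsilon^{-1}\ln k$ and Borel--Cantelli, this first yields the sharper intermediate estimate $\limsup_{t\to\infty}\ln(m+x_1(t))/\ln t\le 1$, from which the claim follows. No moment bound is used. Your scheme instead front-loads a uniform $p$th-moment estimate for $m+x_1(t)$ (this is essentially the $x_1$-part of Lemma~\ref{lm3}, which the paper proves afterwards and independently), and then passes to the pathwise statement through BDG/Kunita maximal inequalities on unit blocks plus Chebyshev and Borel--Cantelli. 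The trade-off is that the paper's proof is self-contained and slightly sharper but leans on a less standard exponential inequality, whereas your proof uses only textbook martingale tools at the cost of establishing the moment bound twice (once here, once in Lemma~\ref{lm3}). One small point to tighten: your BDG step for the Brownian piece needs $\sup_t\E[(m+x_1(t))^{2p}]<\infty$, not just the $p$th moment; your drift-domination argument via $-p b_{1\inf}x_1^2(m+x_1)^{p-1}$ in fact works for every $p>0$, so this is available, but you should say so explicitly rather than restricting to $p\in(0,1)$.
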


\begin{proof}
By It\^{o} formula for the process $e^t\ln(m+x_1(t))$ we have
\begin{align}\label{eq10}
e^{t}\ln(m+x_1(t))-\ln(m+x_{10})=\int_{0}^{t}e^s\left\{\rule{0pt}{14pt}\ln(m+x_1(s))\right.\nonumber\\+\!\frac{x_1(s)}{m+x_1(s)}\left[\rule{0pt}{14pt}
a_1(s)\!-\!b_1(s)x_1(s)\!-\!\frac{c_1(s)x_2(s)}{m(s)+x_1(s)}\right]\!-\!\frac{\sigma_1^2(s)x_1^2(s)}{2(m+x_1(s))^2}\nonumber\\ \left.
+\int\limits_{\R}\left[\rule{0pt}{14pt}\ln\left(\rule{0pt}{12pt}1+\frac{\gamma_1(s,z)x_1(s)}{m+x_1(s)}\right)
-\frac{\gamma_1(s,z)x_1(s)}{m+x_1(s)}\right]\Pi_1(dz)\right\}ds\nonumber\\ +
\int_0^{t}e^s\frac{\sigma_1(s)x_1(s)}{m+x_1(s)}dw_1(s)+\int\limits_0^{t}\!\!\int\limits_{\R}e^s\ln\left(\rule{0pt}{12pt}1+
\frac{\gamma_1(s,z)x_1(s)}{m+x_1(s)}\right)\tilde\nu_1(ds,dz)\nonumber
\end{align}
\begin{align}
+\int\limits_0^{t}\!\!\int\limits_{\R}e^s\ln\left(\rule{0pt}{12pt}1+
\frac{\delta_1(s,z)x_1(s)}{m+x_1(s)}\right)\nu_2(ds,dz).
\end{align}
Let us denote for $0<\kappa\le1$ the process
\begin{align}
\zeta_{\kappa}(t)=\int_0^{t}\!\!\!e^s\frac{\sigma_1(s)x_1(s)}{m+x_1(s)}dw_1(s)\!+\!
\int\limits_0^{t}\!\!\!\int\limits_{\R}\!e^s\ln\left(\rule{0pt}{12pt}1+\frac{\gamma_1(s,z)x_1(s)}{m+x_1(s)}\right)\tilde\nu_1(ds,dz)\nonumber\\
+\!\int\limits_0^{t}\!\!\!\int\limits_{\R}\!e^s\ln\left(\rule{0pt}{12pt}1\!+\!\frac{\delta_1(s,z)x_1(s)}{m+x_1(s)}\right)\!\nu_2(ds,dz)\!-\!
\frac{\kappa}{2}\!\int_0^{t}\!\!e^{2s}\sigma_1^2(s)\left(\frac{x_1(s)}{m+x_1(s)}\right)^2\!\!ds\nonumber\\
-\frac{1}{\kappa}\int\limits_0^{t}\!\!\!\int\limits_{\R}\!\left[\left(1\!+\!\frac{\gamma_1(s,z)x_1(s)}{m+x_1(s)}\right)^{\kappa e^{s}}\!\!\!\!\!-\!1\!-\!\kappa e^s\ln\left(1\!+\!\frac{\gamma_1(s,z)x_1(s)}{m+x_1(s)}\right)\right]\Pi_1(dz)ds\nonumber\\
-\frac{1}{\kappa}\int\limits_0^{t}\!\!\!\int\limits_{\R}\!\left[\left(1\!+\!\frac{\delta_1(s,z)x_1(s)}{m+x_1(s)}\right)^{\kappa e^{s}}-1\right]\Pi_2(dz)ds.\nonumber
\end{align}
By virtue of the exponential inequality (\cite{Bor2}, Lemma 2.2) for any $T>0$,\linebreak $0<\kappa\le1$, $\beta>0$ we have
\begin{align}\label{eq11}
\pr\{\sup_{0\le t\le T}\zeta_{\kappa}(t)>\beta\}\le e^{-\kappa\beta}.
\end{align}
Choose $T=k\tau, k\in \mathbb{N}, \tau>0, \kappa=\varepsilon e^{-k\tau},\beta=\theta e^{k\tau}\varepsilon^{-1}\ln k$, $0<\varepsilon<1$, $\theta>1$ we get
\begin{align}
\pr\{\sup_{0\le t\le T}\zeta_{\kappa}(t)>\theta e^{k\tau}\varepsilon^{-1}\ln k\}\le \frac{1}{k^{\theta}}.\nonumber
\end{align}
By Borel-Cantelli lemma for almost all $\omega\in\Omega$, there is a random integer $k_0(\omega)$, such that $\forall k\ge k_0(\omega)$ and $0\le t\le k\tau$
we have
\begin{align}\label{eq12}
\int_0^{t}\!\!\!e^s\frac{\sigma_1(s)x_1(s)}{m+x_1(s)}dw_1(s)\!+\!
\int\limits_0^{t}\!\!\!\int\limits_{\R}\!e^s\ln\left(\rule{0pt}{12pt}1+\frac{\gamma_1(s,z)x_1(s)}{m+x_1(s)}\right)\tilde\nu_1(ds,dz)\nonumber\\
+\!\int\limits_0^{t}\!\!\!\int\limits_{\R}\!e^s\ln\left(\rule{0pt}{12pt}1\!+\!\frac{\delta_1(s,z)x_1(s)}{m+x_1(s)}\right)\!\nu_2(ds,dz)\le\nonumber\\
\frac{\eps }{2e^{k\tau}}\!\!\int_0^{t}\!\!e^{2s}\left(\frac{\sigma_1(s)x_1(s)}{m+x_1(s)}\right)^2\!\!ds
+\frac{e^{k\tau}}{\eps}\int\limits_0^{t}\!\!\!\int\limits_{\R}\!\left[\left(1\!+\!\frac{\gamma_1(s,z)x_1(s)}{m+x_1(s)}\right)^{\eps e^{s-k\tau}}\right.\nonumber\\ \left.-1-\eps e^{s-k\tau}\ln\left(1\!+\!\frac{\gamma_1(s,z)x_1(s)}{m+x_1(s)}\right)\right]\Pi_1(dz)ds\nonumber\\
+\frac{e^{k\tau}}{\eps}\int\limits_0^{t}\!\!\!\int\limits_{\R}\!\left[\left(1\!+\!\frac{\delta_1(s,z)x_1(s)}{m+x_1(s)}\right)^{\eps e^{s-k\tau}}-1\right]\Pi_2(dz)ds +\frac{\theta e^{k\tau}\ln k}{\eps}.
\end{align}
By using the inequality $x^r\le 1+r(x-1)$, $\forall x\ge0$, $0\le r\le1$  for $x=1+\frac{\gamma_1(s,z)x_1(s)}{m+x_1(s)}$, $r=\eps e^{s-k\tau}$, then for $x=1+\frac{\delta_1(s,z)x_1(s)}{m+x_1(s)}$, $r=\eps e^{s-k\tau}$,  we derive the estimates
\begin{align}\label{eq13}
\frac{e^{k\tau}}{\eps}\int\limits_0^{t}\!\!\!\int\limits_{\R}\!\left[\left(1\!+\!\frac{\gamma_1(s,z)x_1(s)}{m+x_1(s)}\right)^{\eps e^{s-k\tau}}-1
\right.\nonumber\\\left.-\eps e^{s-k\tau}\ln\left(1\!+\!\frac{\gamma_1(s,z)x_1(s)}{m+x_1(s)}\right)\right]\Pi_1(dz)ds\nonumber\\ \le
\int\limits_0^{t}\!\!\!\int\limits_{\R}\!e^s\left[\frac{\gamma_1(s,z)x_1(s)}{m+x_1(s)}-\ln\left(1\!+
\!\frac{\gamma_1(s,z)x_1(s)}{m+x_1(s)}\right)\right]\Pi_1(dz)ds,
\end{align}
\begin{align}\label{eq14}
\frac{e^{k\tau}}{\eps}\int\limits_0^{t}\!\!\!\int\limits_{\R}\!\left[\left(1\!+\!\frac{\delta_1(s,z)x_1(s)}{m+x_1(s)}\right)^{\eps e^{s-k\tau}}-1\right]\Pi_2(dz)ds\nonumber\\ \le
\int\limits_0^{t}\!\!\!\int\limits_{\R}\!e^s\frac{\delta_1(s,z)x_1(s)}{m+x_1(s)}\Pi_2(dz)ds.
\end{align}
From $(\ref{eq10})$, by using $(\ref{eq12})$--$(\ref{eq14})$ we get
\begin{align}\label{eq15}
e^{t}\ln(m+x_1(t))\le \ln(m+x_{10})+\int_{0}^{t}e^s\left\{\rule{0pt}{20pt}\ln(m+x_1(s))\right.\nonumber\\+\frac{x_1(s)}{m+x_1(s)}\left[\rule{0pt}{14pt}
a_1(s)\!-\!b_1(s)x_1(s)\!-\!\frac{c_1(s)x_2(s)}{m(s)+x_1(s)}\right]\!-\!\frac{\sigma_1^2(s)x_1^2(s)}{2(m+x_1(s))^2}\nonumber\\ \left.
\times\left(1-\eps e^{s-k\tau}\right)+\int\limits_{\R}\frac{\delta_1(s,z)x_1(s)}{m+x_1(s)}\Pi_2(dz)\right\}ds+\frac{\theta e^{k\tau}\ln k}{\eps},\ \hbox{a.s.}
\end{align}
It is easy to see that, under Assumption \ref{ass1}, for any $x>0$ there exists a constant $L>0$ independent on $k,s$ and $x$, such that
\begin{align}
\ln(m+x)-\frac{x^2 b_1(s)}{m+x}+\frac{x \alpha_1(s)}{m+x}\le L.\nonumber
\end{align}
So, from $(\ref{eq15})$ for any $(k-1)\tau\le t\le k\tau$ we have (a.s.)
\begin{align}
\frac{\ln(m+x_1(t))}{\ln t}\le e^{-t}\frac{\ln(m+x_{10})}{\ln t}+\frac{L}{\ln t}(1-e^{-t})+\frac{\theta e^{k\tau}\ln k}{\eps e^{(k-1)\tau}\ln(k-1)\tau}.\nonumber
\end{align}
Therefore
\begin{align}
\limsup_{t\to\infty}\frac{\ln(m+x_1(t))}{\ln t}\le \frac{\theta e^{\tau}}{\eps},\ \forall \theta>1, \tau>0, \eps\in (0,1),\quad  \hbox{a.s.}\nonumber
\end{align}
If $\theta\downarrow1, \tau\downarrow 0, \eps\uparrow 1$, then we obtain
\begin{align}
\limsup_{t\to\infty}\frac{\ln(m+x_1(t))}{\ln t}\le1,\quad \hbox{a.s.}
\nonumber
\end{align}
So
\begin{align}
\limsup_{t\to\infty}\frac{\ln(m+x_1(t))}{t}\le0,\quad \hbox{a.s.}
\nonumber
\end{align}
\end{proof}
\begin{cor}\label{cor1}
The density of prey population $x_1(t)$ obeys
\begin{align}
\limsup_{t\to\infty}\frac{\ln x_1(t)}{t}\le0,\quad \hbox{a.s.}\nonumber
\end{align}
\end{cor}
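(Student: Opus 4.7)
The corollary is an immediate consequence of Lemma \ref{lm1}, so my plan is very short. The key observation is that $x_1(t)>0$ almost surely by Theorem \ref{thm1}, so $\ln x_1(t)$ is well-defined a.s., and for any fixed $m>0$ we have the pointwise inequality $x_1(t)<m+x_1(t)$, hence by monotonicity of the logarithm
\begin{align}
\ln x_1(t) \le \ln(m+x_1(t)) \qquad \text{a.s.}\nonumber
\end{align}

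The plan is then to divide this inequality by $t>0$ and pass to the $\limsup$ as $t\to\infty$, which gives
\begin{align}
\limsup_{t\to\infty}\frac{\ln x_1(t)}{t} \le \limsup_{t\to\infty}\frac{\ln(m+x_1(t))}{t} \le 0 \qquad \text{a.s.,}\nonumber
\end{align}
where the last inequality is precisely the conclusion of Lemma \ref{lm1} (applied, for instance, with $m=1$, although any positive $m$ works equally well). There is essentially no obstacle here: Lemma \ref{lm1} was proved uniformly in $m>0$, and the monotonicity step requires no stochastic analysis. The only tiny subtlety is ensuring $\ln x_1(t)$ makes sense on the full probability space, which is guaranteed by the global positivity statement $\pr\{X(t)\in\R^2_+\}=1$ established in Theorem \ref{thm1}.
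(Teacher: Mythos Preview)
Your proof is correct and is exactly the intended argument: the paper states the result as an immediate corollary of Lemma~\ref{lm1} with no separate proof, and the only content is precisely the monotonicity inequality $\ln x_1(t)\le \ln(m+x_1(t))$ that you spell out.
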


\begin{lemma}\label{lm2}
The density of predator population $x_2(t)$ has the property that
\begin{align}
\limsup_{t\to\infty}\frac{\ln x_2(t)}{t}\le0,\quad \hbox{a.s.}\nonumber
\end{align}
\end{lemma}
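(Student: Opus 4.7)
The plan is to adapt the proof of Lemma~\ref{lm1} to $x_2$. I would apply It\^o's formula to $e^t\ln(m+x_2(t))$ for some $m>0$, and control the resulting stochastic integrals by the exponential martingale inequality (\cite{Bor2}, Lemma~2.2) combined with Borel--Cantelli on the dyadic sequence $T=k\tau$ with the parameter choice $\kappa=\eps e^{-k\tau}$, $\beta=\theta e^{k\tau}\eps^{-1}\ln k$ used in (\ref{eq11})--(\ref{eq14}). After absorbing the Poisson compensators via $x^r\le 1+r(x-1)$ for $0\le r\le 1$, one obtains an almost-sure inequality of the form (\ref{eq15}), with the damping term $-b_1(s) x_1^2/(m+x_1)$ replaced by the Leslie--Gower--Holling-type contribution $-c_2(s)x_2^{2}/[(m+x_2)(m(s)+x_1(s))]$.

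The principal obstacle is that $b_2(t)\equiv 0$, so the uniform lower bound $b_{1\inf}>0$ exploited in Lemma~\ref{lm1} to majorize the braced integrand by a constant $L$ is no longer available. The role of the competition coefficient is now played by $c_2(s)/(m(s)+x_1(s))$, which cannot be bounded away from zero because $x_1(s)$ is \emph{a priori} unbounded above. A direct attempt to reproduce the uniform $L$-bound consequently fails, and this is precisely the reason the result is stated as a separate lemma.

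I would bypass the obstacle by invoking Lemma~\ref{lm1} (equivalently Corollary~\ref{cor1}): on a set of full probability, for every $\eta>0$ there is a random $T_\eta(\omega)$ such that $x_1(s)\le e^{\eta s}$ for all $s\ge T_\eta(\omega)$, whence $(m(s)+x_1(s))^{-1}\ge (m_{\sup}+e^{\eta s})^{-1}$. Substituting into the integrand and optimizing in $x_2>0$ (the maximum of $\ln(m+x_2)-c_{2\inf}x_2^{2}/[(m+x_2)(m_{\sup}+e^{\eta s})]$ is attained near $x_2\sim (m_{\sup}+e^{\eta s})/c_{2\inf}$ and equals $\eta s+O(1)$), the integrand is majorized by $\eta s+C$ instead of by a pure constant. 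Plugging this bound into the final computation of Lemma~\ref{lm1}, splitting the time integral at $T_\eta$ (whose pre-$T_\eta$ piece is an a.s.\ finite random constant), and then dividing by $t$ before sending $\theta\downarrow 1$, $\tau\downarrow 0$, $\eps\uparrow 1$, yields $\limsup_{t\to\infty}\ln(m+x_2(t))/t\le \eta$ a.s. Sending $\eta\downarrow 0$ and using $\ln x_2(t)\le \ln(m+x_2(t))$ completes the proof.
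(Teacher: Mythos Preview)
Your outline is correct, but the paper proceeds along a somewhat more economical route. Instead of applying It\^o's formula to $e^{t}\ln(m+x_2(t))$ and then struggling with the damping term $-c_2(s)x_2^{2}/[(m+x_2)(m(s)+x_1(s))]$, the paper works directly with $e^{t}\ln x_2(t)$, so the competition term appears simply as $-c_2(s)x_2(s)/(m(s)+x_1(s))$. After the exponential martingale inequality and Borel--Cantelli (with $\kappa=e^{-k\tau}$ rather than $\eps e^{-k\tau}$), one arrives at an inequality of the same shape as your bound, and the key step is the elementary estimate $\ln x - cx\le -\ln c-1$ applied with $x=x_2(s)$ and $c=c_2(s)/(m(s)+x_1(s))$. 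This one line replaces your optimisation argument and yields $\int_0^t e^{s}\ln(m_{\sup}+x_1(s))\,ds$ directly in the upper bound, after which Lemma~\ref{lm1} is invoked at the very end on $t^{-1}\int_0^t e^{s-t}\ln(m_{\sup}+x_1(s))\,ds$. Your detour through $x_1(s)\le e^{\eta s}$ followed by an $s$-dependent optimisation in $x_2$ produces the same $\eta s+O(1)$ majorant and ultimately the same conclusion, but it introduces an extra random time $T_\eta(\omega)$ that has to be threaded together with the random $k_0(\omega)$ coming from Borel--Cantelli; this works but is slightly more delicate to write out cleanly. In short, both arguments rest on Lemma~\ref{lm1}, but the paper's version packages the dependence on $x_1(s)$ into a single logarithmic term via $\ln x-cx\le -\ln c-1$, avoiding the splitting and the free parameter $\eta$ altogether.
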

\begin{proof}
Making use of It\^{o} formula we get
\begin{align}\label{eq16}
e^{t}\ln x_2(t)-\ln x_{20}=\int_{0}^{t}e^s\left\{\rule{0pt}{16pt}\ln x_2(s)+a_2(s)\!-\!\frac{c_2(s)x_2(s)}{m(s)+x_1(s)}\right.\!-\!\frac{\sigma_2^2(s)}{2}\nonumber\\ \left.
+\int\limits_{\R}\left[\rule{0pt}{14pt}\ln\left(\rule{0pt}{12pt}1+\gamma_2(s,z)\right)-\gamma_2(s,z)\right]\Pi_1(dz)\right\}ds+\psi(t),
\end{align}
where
\begin{align}
\psi(t)=\int_0^{t}e^s\sigma_2(s)dw_2(s)+\int\limits_0^{t}\!\!\int\limits_{\R}e^s\ln\left(\rule{0pt}{12pt}1+
\gamma_2(s,z)\right)\tilde\nu_1(ds,dz)\nonumber\\
+\int\limits_0^{t}\!\!\int\limits_{\R}e^s\ln\left(\rule{0pt}{12pt}1+
\delta_2(s,z)\right)\nu_2(ds,dz).\nonumber
\end{align}
By virtue of the exponential inequality $(\ref{eq11})$ we have
\begin{align}
\pr\{\sup_{0\le t\le T}\zeta_{\kappa}(t)>\beta\}\le e^{-\kappa\beta}, \forall 0<\kappa\le1, \beta>0,\nonumber
\end{align}
where
\begin{align}
\zeta_{\kappa}(t)=\psi(t)-\frac{\kappa}{2}\int\limits_0^t e^{2s}\sigma_2^2(s)ds-
\frac{1}{\kappa}\int\limits_0^{t}\!\!\int\limits_{\R}\left[(1+\gamma_2(s,z))^{\kappa e^s}-1\right.\nonumber\\\left.-\kappa e^s \ln(1+\gamma_2(s,z))\right]\Pi_1(dz)ds-\frac{1}{\kappa}\int\limits_0^{t}\!\!\int\limits_{\R}\left[(1+\delta_2(s,z))^{\kappa e^s}-1\right]\Pi_2(dz)ds.
\nonumber
\end{align}
Choose $T=k\tau, k\in \mathbb{N}, \tau>0, \kappa=e^{-k\tau},\beta=\theta e^{k\tau}\ln k$,  $\theta>1$ we get
\begin{align}
\pr\{\sup_{0\le t\le T}\zeta_{\kappa}(t)>\theta e^{k\tau}\ln k\}\le \frac{1}{k^{\theta}}.\nonumber
\end{align}
By the same arguments as in the proof of Lemma \ref{lm1}, using Borel-Cantelli lemma, we derive from $(\ref{eq16})$
\begin{align}\label{eq17}
e^{t}\ln x_2(t)\le\ln x_{20}+\int_{0}^{t}e^s\left\{\rule{0pt}{16pt}\ln x_2(s)+a_2(s)\!-\!\frac{c_2(s)x_2(s)}{m(s)+x_1(s)}\right.\nonumber\\ \left.
-\frac{\sigma_2^2(s)}{2}\left(1-e^{s-k\tau}\right)
+\int\limits_{\R}\delta_2(s,z)\Pi_2(dz)\right\}ds+\theta e^{k\tau}\ln k,\quad\hbox{a.s.}
\end{align}
for all sufficiently large $k\ge k_0(\omega)$ and $0\le t\le k\tau$.

Using inequality $\ln x -cx\le -\ln c-1$, $\forall x\ge0$, $c>0$ for $x=x_2(s)$, $c=\frac{c_2(s)}{m(s)+x_1(s)}$, we derive from $(\ref{eq17})$ the estimate
\begin{align}
e^{t}\ln x_2(t)\le\ln x_{20}+\int\limits_0^t e^s\ln\left(\rule{0pt}{12pt}m_{\sup}+x_1(s)\right)ds+L(e^t-1)+\theta e^{k\tau}\ln k,\nonumber
\end{align}
for some constant $L>0$.

So for $(k-1)\tau\le t\le k\tau$, $k\ge k_0(\omega)$ we have
\begin{align}
\limsup_{t\to\infty}\frac{\ln x_2(t)}{t}\le \limsup_{t\to\infty}\frac{1}{t}\int\limits_0^t e^{s-t}\ln\left(\rule{0pt}{12pt}m_{\sup}+x_1(s)\right)ds\le0,\nonumber
\end{align}
by virtue of Lemma \ref{lm1}.
\end{proof}

\begin{lemma}\label{lm3}
Let $p > 0$. Then for any initial value $x_{10}>0$, the $p$th-moment of prey population density $x_1(t)$ obeys
\begin{align}\label{eq18}
\limsup_{t\to\infty}\E\left[x_1^p(t)\right] \le K_1(p),
\end{align}
where $K_1(p)>0$ is independent of $x_{10}$.

For any initial value $x_{20}>0$, the expectation  of predator population density $x_2(t)$ obeys
\begin{align}\label{eq19}
\limsup_{t\to\infty}\E\left[x_2(t)\right] \le K_2,
\end{align}
where $K_2>0$ is independent of $x_{20}$.
\end{lemma}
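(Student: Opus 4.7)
The plan is to handle the two assertions in sequence, obtaining \eqref{eq18} first and then recycling it as an input to \eqref{eq19}.

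\medskip

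For \eqref{eq18} I would apply It\^o's formula to the process $e^t x_1^p(t)$. Computing the jump-diffusion drift of $x_1^p$, discarding the non-positive cross term $-pc_1(t)x_2 x_1^p/(m(t)+x_1)$, and using Assumption~\ref{ass1} to bound the contributions from $a_1,\sigma_1,\gamma_1,\delta_1$ and from the finite measures $\Pi_1,\Pi_2$, one obtains a pointwise estimate of the form
\[
\text{drift of }\; e^t x_1^p \;\le\; e^t\bigl[(1+H_0)x_1^p-pb_{1\inf}x_1^{p+1}\bigr]
\]
for some constant $H_0=H_0(p)>0$ independent of $t$. The polynomial $u\mapsto(1+H_0)u^p-pb_{1\inf}u^{p+1}$ is bounded on $[0,\infty)$ by a constant $M(p)$, so after localization by the stopping times $\tau_n$ from the proof of Theorem~\ref{thm1}, taking expectations (the compensated martingale parts vanish) and passing $n\to\infty$ by Fatou's lemma yields $\E[x_1^p(t)]\le x_{10}^p e^{-t}+M(p)(1-e^{-t})$, which is \eqref{eq18} with $K_1(p)=M(p)$.

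\medskip

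The estimate \eqref{eq19} is the delicate part, because $b_2(t)\equiv 0$ means $x_2$ has no intrinsic logistic damping and the only negative feedback comes through the coupled term $-c_2(t)x_2^2/(m(t)+x_1)$. Applying It\^o to $x_2$ itself, localizing, and taking expectations leads to
\[
\frac{d}{dt}\E[x_2(t)]=\alpha_2(t)\,\E[x_2(t)]-c_2(t)\,\E\!\left[\frac{x_2^2(t)}{m(t)+x_1(t)}\right].
\]
The key move is the Cauchy--Schwarz inequality, applied with $X=x_2/\sqrt{m+x_1}$ and $Y=\sqrt{m+x_1}$, giving
\[
\E\!\left[\frac{x_2^2(t)}{m(t)+x_1(t)}\right] \;\ge\; \frac{(\E[x_2(t)])^2}{m(t)+\E[x_1(t)]}.
\]
Combining this with the bound $\E[x_1(t)]\le K_1(1)+\eps$ for $t$ large (from the $p=1$ case of \eqref{eq18}) turns the identity into the logistic differential inequality
\[
u'(t)\;\le\;\alpha_{2\sup}u(t)-\frac{c_{2\inf}}{m_{\sup}+K_1(1)+\eps}\,u(t)^2, \qquad u(t):=\E[x_2(t)],
\]
valid for all sufficiently large $t$. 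Comparison with the associated logistic ODE forces $\limsup_{t\to\infty}u(t)\le\alpha_{2\sup}(m_{\sup}+K_1(1)+\eps)/c_{2\inf}$, and sending $\eps\downarrow 0$ yields \eqref{eq19} with $K_2=\alpha_{2\sup}(m_{\sup}+K_1(1))/c_{2\inf}$, independent of $x_{20}$.

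\medskip

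I expect the main obstacle to be the second assertion: since the predator has no intrinsic self-limitation, the damping has to be extracted from the coupled denominator $m+x_1$, which requires first the moment bound on $x_1$ (hence the ordering) and then the Cauchy--Schwarz step that turns the mixed expectation $\E[x_2^2/(m+x_1)]$ into a quadratic functional of $\E[x_2]$ alone. The accompanying localization and Fatou/dominated-convergence argument needed to justify the interchange of $\E$ with $d/dt$ is routine under Assumption~\ref{ass1}, since the boundedness of $\gamma_i,\delta_i$ and the finiteness of $\Pi_i(\R)$ make the compensated Poisson stochastic integrals square-integrable martingales up to each $\tau_n$.
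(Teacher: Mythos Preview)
Your treatment of \eqref{eq18} is exactly the paper's: apply It\^o to $e^t x_1^p$, discard the predator term, bound the remaining drift polynomial in $x_1$, localize, and let $n\to\infty$.

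For \eqref{eq19} your route diverges from the paper's. The paper does \emph{not} separate the two species; instead it applies It\^o to the linear combination $e^t\bigl(k_1x_1(t)+k_2x_2(t)\bigr)$ and observes that in the resulting drift the dangerous mixed term $\bigl[k_2(1+\alpha_2)-k_1c_1\bigr]x_1x_2/(m+x_1)$ vanishes for the specific choice $k_2=k_1c_{1\inf}/d_2$ (with $d_2=1+a_{2\sup}+|\bar\delta_2|_{\sup}$). What remains is a cubic in $x_1$ with negative leading coefficient plus a quadratic in $x_2$ with negative leading coefficient, both uniformly bounded above; the argument then closes exactly as for \eqref{eq18}, yielding $\E[x_1(t)+\tfrac{c_{1\inf}}{d_2}x_2(t)]\le Ce^{-t}+L$ directly. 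Your approach instead feeds the bound on $\E[x_1]$ into a Cauchy--Schwarz estimate $\E\bigl[x_2^2/(m+x_1)\bigr]\ge(\E x_2)^2/(m+\E x_1)$ and compares $u(t)=\E[x_2(t)]$ with a logistic ODE. Conceptually this is sound and arguably more transparent about \emph{where} the predator damping comes from, but it buys that clarity at the cost of an extra technical layer: to write the equality $u'(t)=\alpha_2(t)u(t)-c_2(t)\E[x_2^2/(m+x_1)]$ you must already know that $\E[x_2(t)]<\infty$ and that the stochastic integrals in the It\^o expansion of $x_2$ are true (not merely local) martingales, i.e.\ essentially that $\E[x_2^2(t)]<\infty$. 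That is obtainable (e.g.\ from the crude bound $x_2(t)\le x_{20}\exp\bigl(\int_0^t p_2\,ds+M_2(t)\bigr)$ together with boundedness of $\gamma_2,\delta_2$ and finiteness of $\Pi_i(\R)$), but it is not quite the ``routine'' localization you describe, and it is precisely the step the paper's Lyapunov combination sidesteps. A minor related point: your formula $K_2=\alpha_{2\sup}(m_{\sup}+K_1(1))/c_{2\inf}$ is only positive when $\alpha_{2\sup}>0$; when $\alpha_{2\sup}\le 0$ the logistic comparison gives $\limsup u=0$ and you should just take any $K_2>0$.
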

\begin{proof}
Let $\tau_n$ be the stopping time defined in Theorem \ref{thm1}. Applying the It\^{o} formula to the process $V(t,x_1(t))=e^tx_1^p(t)$, $p>0$ we obtain
\begin{align}
V(t\wedge\tau_n,x_1(t\wedge\tau_n))=x_{10}^p+\int\limits_0^{t\wedge\tau_n}e^sx_1^p(s)\left\{\rule{0pt}{18pt}1
+p\left[\rule{0pt}{16pt}a_1(s)-b_1(s)x_1(s)\right.\right.\nonumber
\end{align}
\begin{align}\label{eq20} \left.-\frac{c_{1}(s)x_{2}(s)}{m(s)+x_{1}(s)}\right]
\!\!+\!\frac{p(p-1)\sigma_1^2(s)}{2}\!+\!\!\int\limits_{\R}\!\!\!\left[(1+\gamma_1(s,z))^p\!-\!1\!-\!p\gamma_1(s,z)\right]\Pi_1(dz)\!\nonumber\\\left.
+\!\int\limits_{\R}\!\!\!\left[(1+\delta_1(s,z))^p\!-\!1\right]\Pi_2(dz) \right\}ds
+\int\limits_0^{t\wedge\tau_n}\!\!\!p e^sx_1^p(s)\sigma_1(s)dw_1(s)\nonumber
\\+\int\limits_0^{t\wedge\tau_n}\!\!\!\int\limits_{\R}e^sx_1^p(s)\left[(1+\gamma_1(s,z))^p-1\right]\tilde\nu_1(ds,dz)\nonumber\\
+\int\limits_0^{t\wedge\tau_n}\!\!\!\int\limits_{\R}e^sx_1^p(s)\left[(1+\delta_1(s,z))^p-1\right]\tilde\nu_2(ds,dz).
\end{align}

Under Assumption \ref{ass1} there is constant $K_1(p)>0$, such that
\begin{align}\label{eq21}
e^sx_1^p\left\{\rule{0pt}{18pt}1+p\left[a_1(s)-b_1(s)x_1-\frac{c_{1}(s)x_{2}}{m(s)+x_1}\right]\!+\!\frac{p(p-1)\sigma_1^2(s)}{2}+\right.\nonumber\\ \left.+\!\!\int\limits_{\R}\!\!\left[(1\!+\!\gamma_1(s,z))^p\!-\!1\!-\!p\gamma_1(s,z)\right]\Pi_1(dz)
\!+\!\!\int\limits_{\R}\!\!\left[(1+\delta_1(s,z))^p\!-\!1\right]\Pi_2(dz) \right\}\nonumber\\
\le K_1(p)e^s.
\end{align}
From $(\ref{eq20})$ and $(\ref{eq21})$, taking expectations, we obtain
\begin{align}
\E[V(t\wedge\tau_n,x_1(t\wedge\tau_n))]\le x_{10}^p+K_1(p)e^t.\nonumber
\end{align}

Letting $n\to\infty$ leads to the estimate
\begin{align}\label{eq22}
e^t\E[x_1^p(t)]\le x_{10}^p+e^tK_1(p).
\end{align}
So from $(\ref{eq22})$ we derive $(\ref{eq18})$.

Let us prove the estimate $(\ref{eq19})$. Applying the It\^{o} formula to the process $U(t,X(t))=e^t[k_1x_1(t)+k_2 x_2(t)]$, $k_i>0, i=1,2$ we obtain
\begin{align}\label{eq23}
dU(t,X(t))=e^t\left\{\rule{0pt}{18pt}k_1x_1(t)+k_2x_2(t)+k_1\left[\rule{0pt}{16pt}a_1(t)x_1(t)-b_1(t)x_1^2(t)\right.\right.\nonumber\\ \left.-\frac{c_{1}(t)x_1(t)x_{2}(t)}{m(t)+x_{1}(t)}\right]+k_2\left[\rule{0pt}{16pt}a_2(t)x_2(t)-\frac{c_{2}(t)x^2_{2}(t)}{m(t)+x_{1}(t)}\right]
\nonumber\\\left.
+\!\sum_{i=1}^2k_i\int\limits_{\R}\!\!\!x_i(t)\delta_i(t,z)\Pi_2(dz)\right\}dt
+ e^t\left\{\sum_{i=1}^2k_i\left[\rule{0pt}{18pt}x_i(t)\sigma_i(t)dw_i(t)\right.\right.\nonumber
\\\left. \left.+\int\limits_{\R}x_i(t)\gamma_i(t,z)\tilde\nu_1(dt,dz)+\int\limits_{\R}x_i(t)\delta_i(t,z)\tilde\nu_2(dt,dz)\right]\right\}.
\end{align}
For the function
\begin{align}
f(t,x_1,x_2)\!=\!\frac{1}{m(t)+x_1}\!\left\{\rule{0pt}{13pt}\!k_1\!\left[\!-b_1(t)x_1^3\!+\!\left(\rule{0pt}{12pt}1\!+\!a_1(t)\!+\!
\bar\delta_1(t)\!-\!b_1(t)m(t)\right)x_1^2\right.\right.\nonumber\\ \left.
+m(t)\left(\rule{0pt}{12pt}1\!+\!a_1(t)\!+\!\bar\delta_1(t)\right)x_1\right]+\left[\rule{0pt}{12pt}k_2\left(\rule{0pt}{12pt}1+a_2(t)+\bar\delta_2(t)\right)
-k_1c_1(t)\right]x_1x_2\nonumber\\\left.
+k_2\left[-c_2(t)x_2^2+m(t)\left(\rule{0pt}{12pt}1+a_2(t)+\bar\delta_2(t)\right)x_2\right]\rule{0pt}{13pt}\right\},\nonumber\\ \hbox{where}\
\bar\delta_i(t)=\int\limits_{\R}\!\!\!\delta_i(t,z)\Pi_2(dz),\ i=1,2\nonumber
\end{align}
we have
\begin{align}
f(t,x_1,x_2)\le\frac{\phi_1(x_1,x_2)+\phi_2(x_2)}{m(t)+x_1},\nonumber
\end{align}
where
\begin{align}
\phi_1(x_1,x_2)=k_1\!\left[\!-b_{1\inf}x_1^3\!+\!\left(\rule{0pt}{12pt}d_1-\!b_{1\inf}m_{\inf}\right)x_1^2
+m_{\sup}d_1x_1\right]\nonumber\\+\left[\rule{0pt}{12pt}k_2d_2-k_1c_{1\inf}\right]x_1x_2,\nonumber\\
\phi_2(x_2)=k_2\left[-c_{2\inf}x_2^2+m_{\sup}d_2x_2\right],\
d_i=1\!+\!a_{i\sup}+|\bar\delta_i|_{\sup},\ i=1,2.\nonumber
\end{align}
For $k_2=k_1 c_{1\inf}/d_2$ there is a constant $L'>0$, such that $\phi_1(x_1,x_2)\le L'k_1$ and $\phi_2(x_2)\le L'k_1$. So there is a constant $L>0$, such that
\begin{align}\label{eq24}
f(t,x_1,x_2)\le L k_1.
\end{align}
From $(\ref{eq23})$ and $(\ref{eq24})$ by integrating and taking expectation, we derive
\begin{align}
\E[U(t\wedge\tau_n,X(t\wedge\tau_n))]\le k_1\left[x_{10}+\frac{c_{1\inf}}{d_2} x_{20}+L e^{t}\right].\nonumber
\end{align}
Letting $n\to\infty$ leads to the estimate
\begin{align}
e^t\E\left[x_1(t)+\frac{c_{1\inf}}{d_2}x_2(t)\right]\le x_{10}+\frac{c_{1\inf}}{d_2}x_{20}+L e^{t}.\nonumber
\end{align}
So
\begin{align}\label{eq25}
\E[x_2(t)]\le \left(\frac{d_2}{c_{1\inf}}x_{10}+x_{20}\right)e^{-t}+\frac{d_2}{c_{1\inf}}L.
\end{align}
From $(\ref{eq25})$ we have $(\ref{eq19})$.
\end{proof}
\begin{lemma}\label{lm4}
If $p_{2\inf}>0$, where $p_2(t)=a_2(t)-\beta_2(t)$, then for any initial value $x_{20}>0$, the predator population density $x_2(t)$  satisfies 
\begin{align}\label{eq26}
\limsup_{t\to\infty}\E\left[\left(\frac{1}{x_2(t)}\right)^{\theta}\right] \le K(\theta),\ 0<\theta<1,
\end{align}

\end{lemma}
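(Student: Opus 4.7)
\emph{Proof plan.} The plan is to apply It\^o's formula to the auxiliary process $e^{\lambda t}x_2^{-\theta}(t)$ for a small $\lambda>0$ to be fixed below, mirroring the Lemma \ref{lm3} strategy of multiplying by an exponential factor. Writing $dx_2$ in the form using the compensated measure $\tilde\nu_2$, I obtain
\begin{align}
d\bigl(e^{\lambda t}x_2^{-\theta}(t)\bigr)=e^{\lambda t}x_2^{-\theta}(t)\left[\lambda+H_\theta(t)+\frac{\theta c_2(t)x_2(t)}{m(t)+x_1(t)}\right]dt+dM(t),\nonumber
\end{align}
where $dM(t)$ collects the Wiener and compensated Poisson martingale pieces and
\begin{align}
H_\theta(t)=-\theta\alpha_2(t)+\frac{\theta(\theta+1)}{2}\sigma_2^2(t)+\int_\R\bigl[(1+\gamma_2(t,z))^{-\theta}-1+\theta\gamma_2(t,z)\bigr]\Pi_1(dz)\nonumber\\+\int_\R\bigl[(1+\delta_2(t,z))^{-\theta}-1+\theta\delta_2(t,z)\bigr]\Pi_2(dz).\nonumber
\end{align}

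The next step uses the hypothesis $p_{2\inf}>0$. I would Taylor-expand $(1+y)^{-\theta}=1-\theta\ln(1+y)+\frac{\theta^2}{2}\ln^2(1+y)+O(\theta^3)$ uniformly in $t,z$ (valid because $\ln(1+\gamma_2)$ and $\ln(1+\delta_2)$ are bounded by Assumption \ref{ass1}). After collecting terms the expression $-\theta\alpha_2(t)+\theta\int\delta_2\Pi_2$ cancels partially, leaving $H_\theta(t)=-\theta p_2(t)+R(\theta,t)$ with $|R(\theta,t)|\le C\theta^2$ and $C$ depending only on the sup-norms in Assumption \ref{ass1}. Choosing $\lambda=\theta p_{2\inf}/4$ and restricting $\theta$ so that $C\theta\le p_{2\inf}/4$ then yields $\lambda+H_\theta(t)\le -\eta<0$ uniformly in $t$, with $\eta=\theta p_{2\inf}/2$.

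The remaining positive cross term I would bound pointwise by $\theta c_2(t)x_2^{1-\theta}(t)/(m(t)+x_1(t))\le(\theta c_{2\sup}/m_{\inf})x_2^{1-\theta}(t)$, and then in expectation through Jensen's inequality applied to the concave map $y\mapsto y^{1-\theta}$: $\E[x_2^{1-\theta}(s)]\le(\E[x_2(s)])^{1-\theta}$. Lemma \ref{lm3} makes this uniformly bounded by some constant $\tilde K$. Localising with the stopping time $\tau_n$ from Theorem \ref{thm1}, taking expectations (so $dM$ drops out), and letting $n\to\infty$, I expect
\begin{align}
\E\bigl[e^{\lambda t}x_2^{-\theta}(t)\bigr]\le x_{20}^{-\theta}+\frac{\theta c_{2\sup}\tilde K}{m_{\inf}\lambda}\bigl(e^{\lambda t}-1\bigr),\nonumber
\end{align}
since the contribution from $\lambda+H_\theta\le-\eta$ is non-positive and may be discarded. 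Dividing by $e^{\lambda t}$ and passing to $\limsup_{t\to\infty}$ gives (\ref{eq26}) with $K(\theta)=\theta c_{2\sup}\tilde K/(m_{\inf}\lambda)$.

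The main obstacle is forcing $H_\theta$ to be strictly negative: this is the only place $p_{2\inf}>0$ enters, and the quadratic remainder in the Taylor expansion is what restricts $\theta$ and dictates the dependence of $K(\theta)$ on $\theta$. The localization and expectation steps are routine once the sign of the drift is secured and the uniform bound on $\E[x_2^{1-\theta}]$ from Lemma \ref{lm3} is available.
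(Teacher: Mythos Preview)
Your argument is correct, but it differs from the paper's in a meaningful way. The paper never works with $x_2^{-\theta}$ directly; it sets $U=1/x_2$, applies It\^{o} to $(1+U)^{\theta}$, and shows that the drift is bounded above by $\theta(1+U)^{\theta-2}[-K_0(\theta)U^2+K_1(\theta)U+K_2(\theta)]$. The extra ``$+1$'' is the point: after multiplying by $e^{\lambda t}$ and choosing $\lambda<\theta K_0(\theta)$, the drift is \emph{pointwise} bounded by a constant, because $(1+U)^{\theta-2}$ kills the quadratic growth. In particular the cross term $c_2 x_2/(m+x_1)$ becomes, after multiplying by $U$, a bounded quantity $c_2/(m+x_1)\le c_{2\sup}/m_{\inf}$, so the paper never needs Lemma~\ref{lm3}.

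Your route trades this algebraic trick for an appeal to Lemma~\ref{lm3}: the cross term in your drift is $e^{\lambda t}\theta c_2 x_2^{1-\theta}/(m+x_1)$, which is not pointwise bounded, and you control its expectation via Jensen and the moment bound on $x_2$. This is legitimate and arguably more transparent (one application of It\^{o} instead of two), but it introduces a dependence on Lemma~\ref{lm3} that the paper's proof avoids. One small point to tighten: Lemma~\ref{lm3} as stated gives only $\limsup_{t\to\infty}\E[x_2(t)]\le K_2$, not a uniform-in-$t$ bound; you should either note that inequality~(\ref{eq25}) in its proof already yields $\sup_{t\ge0}\E[x_2(t)]<\infty$, or split the time integral at some large $T_0$ and observe that the contribution from $[0,T_0]$ vanishes after dividing by $e^{\lambda t}$. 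Finally, both your argument and the paper's only secure the negative drift for sufficiently small $\theta\in(0,1)$, which matches how the lemma is actually used downstream.
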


\begin{proof}
For the process $U(t)=1/x_2(t)$ by the It\^{o} formula we derive
\begin{align}
U(t)=U(0)+\int\limits_0^t U(s) \left[\rule{0pt}{20pt}\frac{c_2(s)x_{2}(s)}{m(s)+x_{1}(s)}-a_2(s)+\sigma_2^2(s)\right.\nonumber\\ \left.+\int\limits_{\R}\frac{\gamma_2^2(s,z)}{1+\gamma_2(s,z)}\Pi_1(dz)\right]ds
-\int\limits_0^tU(s)\sigma_2(s)dw_2(s)\nonumber\\-\int\limits_0^t\!\!\!\int\limits_{\R}U(s)\frac{\gamma_2(s,z)}{1+\gamma_2(s,z)}\tilde\nu_1(ds,dz)-
\int\limits_0^t\!\!\!\int\limits_{\R}U(s)\frac{\delta_2(s,z)}{1+\delta_2(s,z)}\nu_2(ds,dz).
\nonumber
\end{align}
Then, by applying It\^{o} formula, we derive, for $0<\theta<1$
\begin{align}
(1+U(t))^{\theta}=(1+U(0))^{\theta}+\int\limits_0^t \theta(1+U(s))^{\theta-2}\left\{\rule{0pt}{20pt}(1+U(s))U(s)\right.\nonumber\\ \times\left[\frac{c_2(s)x_{2}(s)}{m(s)+x_{1}(s)}-a_2(s)+\sigma_2^2(s)\!
+\!\int\limits_{\R}\frac{\gamma_2^2(s,z)}{1+\gamma_2(s,z)}\Pi_1(dz)\right]
\nonumber
\end{align}
\begin{align}
+\frac{\theta-1}{2}U^2(s)\sigma_2^2(s)\nonumber\\+\frac{1}{\theta}\int\limits_{\R}\left[(1+U(s))^2\left(\left(\frac{1+U(s)+
\gamma_2(s,z)}{(1+\gamma_2(s,z))(1+U(s))}\right)^{\theta}-1\right)\right.\nonumber\\+\left.
\theta(1+U(s))\frac{U(s)\gamma_2(s,z)}{1+\gamma_2(s,z)}\right]\Pi_1(dz)\nonumber\end{align}
\begin{align} \left.
+\frac{1}{\theta}\int\limits_{\R}(1+U(s))^2\left[\left(\frac{1+U(s)+\delta_2(s,z)}{(1+\delta_2(s,z))(1+U(s))}\right)^{\theta}
-1\right]\Pi_2(dz)\right\}ds\nonumber\\
-\int\limits_0^t\theta(1+U(s))^{\theta-1}U(s)\sigma_2(s)dw_2(s)\nonumber\\+
\int\limits_0^t\!\!\int\limits_{\R}\left[\left(1+\frac{U(s)}{1+\gamma_2(s,z)}\right)^{\theta}-(1+U(s))^\theta\right]\tilde\nu_1(ds,dz)\nonumber
\end{align}
\begin{align}
+\int\limits_0^t\!\!\int\limits_{\R}\left[\left(1+\frac{U(s)}{1+\delta_2(s,z)}\right)^{\theta}-(1+U(s))^\theta\right]\tilde\nu_2(ds,dz)
\nonumber
\end{align}
\begin{align}\label{eq27}
=(1+U(0))^{\theta} +\int\limits_0^t \theta(1+U(s))^{\theta-2}J(s)ds\nonumber\\-I_{1,stoch}(t)+I_{2,stoch}(t)+I_{3,stoch}(t),
\end{align}
where $I_{j,stoch}(t), j=\overline{1,3}$ are the corresponding stochastic integrals in $(\ref{eq27})$. Under the Assumption \ref{ass1} there exists constants $|K_1(\theta)|<\infty$, $|K_2(\theta)|<\infty$ such, that for the process $J(t)$ we have the estimate
\begin{align}
J(t)\le (1+U(t))U(t)\left[-a_{2}(t)+\frac{c_{2\sup}U^{-1}(t)}{m_{inf}}
+\sigma_2^2(t)\!\!\phantom{\int\limits_{\R}}\right.\nonumber\\ \left.+\int\limits_{\R}\frac{\gamma_2^2(s,z)}{1+\gamma_2(s,z)}\Pi_1(dz)\right]
+\frac{\theta-1}{2}U^2(s)\sigma_2^2(s)\nonumber\\ +\frac{1}{\theta}\int\limits_{\R}\left[(1+U(s))^2\left(\left(\frac{1}{1+\gamma_2(s,z)}+
\frac{1}{1+U(s)}\right)^{\theta}-1\right)\right.\nonumber\\+\left.
\theta(1+U(s))\frac{U(s)\gamma_2(s,z)}{1+\gamma_2(s,z)}\right]\Pi_1(dz)
\nonumber\\
+\frac{1}{\theta}\int\limits_{\R}(1+U(s))^2\left[\left(\frac{1}{1+\delta_2(s,z)}+
\frac{1}{1+U(s)}\right)^{\theta}-1\right]\Pi_2(dz)\nonumber\\ \le U^2(t)\left[-a_{2}(t)+\frac{\sigma_2^2(t)}{2} +\int\limits_{\R}\gamma_2(t,z)\Pi_1(dz)+\frac{\theta}{2}\sigma_2^2(t)\right.\nonumber\\ \left.+\frac{1}{\theta}\int\limits_{\R}[(1+\gamma_2(t,z))^{-\theta}-1]\Pi_1(dz)
+\frac{1}{\theta}\int\limits_{\R}[(1+\delta_2(t,z))^{-\theta}-1]\Pi_2(dz)\right]\nonumber\\ +K_1(\theta)U(t)+K_2(\theta)=-K_0(t,\theta)U^2(t)+K_1(\theta)U(t)+K_2(\theta),
\nonumber
\end{align}
where we used the inequality $(x+y)^{\theta}\le x^{\theta}+\theta x^{\theta-1}y$, $0<\theta<1$, $x,y>0$. Due to
\begin{align}
\lim_{\theta\to0+}\left[\frac{\theta}{2}\sigma_2^2(t)+\frac{1}{\theta}\int\limits_{\R}[(1+\gamma_2(t,z))^{-\theta}-1]\Pi_1(dz)\right.\nonumber\\ \left.
+\frac{1}{\theta}\int\limits_{\R}[(1+\delta_2(t,z))^{-\theta}-1]\Pi_2(dz)
+\int\limits_{\R}\ln(1+\gamma_2(t,z))\Pi_1(dz)\right.\nonumber\\\left.+\int\limits_{\R}\ln(1+\delta_2(t,z))\Pi_2(dz)\right]=\lim_{\theta\to0+}\Delta(\theta)=0,\nonumber
\end{align}
and condition $p_{2\inf}>0$ we can choose a sufficiently small $0<\theta<1$ to satisfy
\begin{align}
K_0(\theta)=\inf_{t\ge0}K_0(t,\theta)=\inf_{t\ge0}[p_2(t)-\Delta(\theta)]= p_{2\inf}-\Delta(\theta)>0.\nonumber
\end{align}
So from $(\ref{eq27})$ and the estimate for $J(t)$ we derive
\begin{align}\label{eq28}
d\left[(1+U(t))^\theta\right]\le\theta(1+U(t))^{\theta-2}[-K_0(\theta)U^2(t)+K_1(\theta)U(t)+K_2(\theta)]dt\nonumber\\
-\theta(1+U(t))^{\theta-1}U(t)\sigma_2(t)dw_2(t)+
\int\limits_{\R}\left[\left(1+\frac{U(t)}{1+\gamma_2(t,z)}\right)^{\theta}\right.
\nonumber\\ \left.
-(1\!+\!U(t))^\theta\rule{0pt}{18pt}\right]\!\tilde\nu_1(dt,dz)\! +\!
\int\limits_{\R}\!\left[\left(1\!+\!\frac{U(t)}{1+\delta_2(t,z)}\right)^{\theta}\!\!
-\!(1\!+\!U(t))^\theta\right]\!\tilde\nu_2(dt,dz).
\end{align}
By the It\^{o} formula and $(\ref{eq28})$ we have
\begin{align}\label{eq29}
d\left[e^{\la t}(1+U(t))^\theta\right]=\la e^{\la t}(1+U(t))^\theta dt+e^{\la t} d\left[(1+U(t))^\theta\right]\nonumber \\
\le e^{\la t}\theta(1+U(t))^{\theta-2}\left[-\left(K_0(\theta)-\frac{\la}{\theta}\right)U^2(t)+\left(K_1(\theta)+\frac{2\la}{\theta}\right)U(t)\right.\nonumber\\ \left.
+K_2(\theta)+\frac{\la}{\theta}\right]dt-\theta e^{\la t}(1+U(t))^{\theta-1}U(t)\sigma_2(t)dw_2(t)\nonumber\\
+e^{\la t}\int\limits_{\R}\left[\left(1+\frac{U(t)}{1+\gamma_2(t,z)}\right)^{\theta}-(1+U(t))^\theta\right]\tilde\nu_1(dt,dz)\nonumber\\ \displaystyle+
e^{\la t}\int\limits_{\R}\left[\left(1+\frac{U(t)}{1+\delta_2(t,z)}\right)^{\theta}-(1+U(t))^\theta\right]\tilde\nu_2(dt,dz).
\end{align}

Let us choose $\la=\la(\theta)>0$ such that $K_0(\theta)-\la/\theta>0$. Then there is a constant $K>0$, such that
\begin{align}\label{eq30}
(1+U(t))^{\theta-2}\left[-\left(K_0(\theta)-\frac{\la}{\theta}\right)U^2(t)\right.\nonumber\\\left.
+\left(K_1(\theta)+\frac{2\la}{\theta}\right)U(t)
+K_2(\theta)+\frac{\la}{\theta}\right]\le K.
\end{align}
Let $\tau_n$ be the stopping time defined in Theorem \ref{thm1}. Then by integrating $(\ref{eq29})$, using $(\ref{eq30})$  and taking the expectation we obtain
\begin{align}
\E\left[e^{\la (t\wedge\tau_n)}(1+U(t\wedge\tau_n))^\theta\right]\le \left(1+\frac{1}{x_{20}}\right)^\theta+\frac{\theta}{\la}K\left(e^{\la t}-1\right).
\nonumber
\end{align}
Letting $n\to\infty$ leads to the estimate
\begin{align}\label{eq31}
e^t\E\left[(1+U(t))^\theta\right]\le \left(1+\frac{1}{x_{20}}\right)^\theta+\frac{\theta}{\la}K\left(e^{\la t}-1\right).
\end{align}
From $(\ref{eq31})$ we obtain
\begin{align}
\limsup_{t\to\infty}\E\left[\left(\frac{1}{x_2(t)}\right)^{\theta}\right]=\limsup_{t\to\infty}\E\left[U^{\theta}(t)\right]\nonumber\\ \le \limsup_{t\to\infty}\E\left[(1+U(t))^{\theta}\right]\le \frac{\theta}{\la(\theta)}K,\nonumber
\end{align}
this implies  $(\ref{eq26})$.
\end{proof}

\section{The long time behaviour}

\begin{defin}\label{def1}(\cite{LiMao})
The solution $X(t)$ to the system $(\ref{eq3})$ is said to be stochastically ultimately
bounded, if for any $\varepsilon\in(0,1)$, there is a positive constant $\chi=\chi(\varepsilon)>0$, such that for
any initial value $X_0\in\R^2_{+}$, the solution to the system $(\ref{eq3})$ has the property that
\begin{align}
\limsup_{t\to\infty}\pr\left\{|X(t)|>\chi\right\}<\varepsilon.\nonumber
\end{align}
\end{defin}

In what follows in this section we will assume that Assumption \ref{ass1} holds.

\begin{thm}\label{thm2}
The solution $X(t)$ to the system $(\ref{eq3})$ with initial value $X_0\in\R^2_{+}$ is stochastically ultimately
bounded.
\end{thm}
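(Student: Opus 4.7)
The plan is to reduce stochastic ultimate boundedness to the first-moment bounds already established in Lemma~\ref{lm3}, via Chebyshev/Markov's inequality. Since $x_1(t)>0$ and $x_2(t)>0$ almost surely by Theorem~\ref{thm1}, we have the elementary inequality
\begin{align}
|X(t)| = \sqrt{x_1^2(t)+x_2^2(t)} \le x_1(t) + x_2(t), \qquad t\ge 0,\nonumber
\end{align}
so it suffices to bound the first moment of $x_1(t)+x_2(t)$ and then apply Markov's inequality.

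First I would invoke Lemma~\ref{lm3} with $p=1$ to obtain
\begin{align}
\limsup_{t\to\infty}\E[x_1(t)] \le K_1(1), \qquad \limsup_{t\to\infty}\E[x_2(t)] \le K_2,\nonumber
\end{align}
with $K_1(1),K_2>0$ independent of the initial data. Setting $K := K_1(1)+K_2$, these combine to give
\begin{align}
\limsup_{t\to\infty}\E|X(t)| \le \limsup_{t\to\infty}\E[x_1(t)] + \limsup_{t\to\infty}\E[x_2(t)] \le K.\nonumber
\end{align}

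Next, for an arbitrary $\varepsilon\in(0,1)$, I would choose $\chi = \chi(\varepsilon) := K/\varepsilon + 1$ (any $\chi > K/\varepsilon$ works) and apply Markov's inequality to obtain, for each $t\ge 0$,
\begin{align}
\pr\{|X(t)|>\chi\} \le \frac{\E|X(t)|}{\chi}.\nonumber
\end{align}
Taking $\limsup$ as $t\to\infty$ and using the moment bound yields
\begin{align}
\limsup_{t\to\infty}\pr\{|X(t)|>\chi\} \le \frac{K}{\chi} < \varepsilon,\nonumber
\end{align}
which matches Definition~\ref{def1} and completes the proof.

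There is essentially no obstacle here: the entire content has been absorbed into Lemma~\ref{lm3}, and the remaining argument is a one-line application of Markov's inequality. The only thing to be careful about is ensuring that the $\limsup$ of the expectation controls $\limsup$ of the probability, which is immediate from the monotonicity of $\limsup$ under the pointwise inequality $\pr\{|X(t)|>\chi\}\le \E|X(t)|/\chi$. Since both constants $K_1(1)$ and $K_2$ from Lemma~\ref{lm3} are independent of $X_0$, the resulting $\chi(\varepsilon)$ is also independent of the initial value, as required by Definition~\ref{def1}.
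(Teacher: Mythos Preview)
Your proof is correct and follows essentially the same route as the paper: invoke Lemma~\ref{lm3} to bound $\limsup_{t\to\infty}\E[x_i(t)]$, use $|X(t)|\le x_1(t)+x_2(t)$, and apply Markov/Chebyshev with $\chi>K/\varepsilon$. The only cosmetic difference is notation ($K_1(1)$ versus the paper's $K_1$), so there is nothing to add.
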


\begin{proof}

From the Lemma \ref{lm3} we have the estimate
\begin{align}\label{eq32}
\limsup_{t\to\infty}E[x_i(t)]\le K_i,\ i=1,2.
\end{align}
For $X=(x_1,x_2)\in\R^2_{+}$ we have $|X|\le x_1+x_2$, therefore, from $(\ref{eq32})$\linebreak
$\limsup_{t\to\infty}E[|X(t)|]\le L=K_1+K_2$. Let $\chi>L/\varepsilon$, $\forall\varepsilon\in(0,1)$. Then applying the Chebyshev inequality yields
\begin{align}
\limsup_{t\to\infty}\pr\{|X(t)|>\chi\}\le\frac{1}{\chi}\lim\sup_{t\to\infty}E[|X(t)|]\le \frac{L}{\chi}<\varepsilon.\nonumber
\end{align}
\end{proof}

The property of stochastic permanence is important since it means the long-time survival in a population dynamics.

\begin{defin}
The population density $x(t)$ is said to be stochastically permanent if for any $\eps>0$, there are positive constants $H=H(\eps)$, $h=h(\eps)$ such that
\begin{align}
\liminf\limits_{t\to\infty}\pr\{x(t)\le H\}\ge1-\eps,\quad \liminf\limits_{t\to\infty}\pr\{x(t)\ge h\}\ge1-\eps,
\nonumber
\end{align}
for any inial value $x_{0}>0$.
\end{defin}

\begin{thm}
 If $p_{2\inf}>0$, where $p_2(t)=a_2(t)-\beta_2(t)$, then for any initial value $x_{20}>0$, the predator population density $x_2(t)$ is stochastically permanent.
\end{thm}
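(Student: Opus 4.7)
The plan is to combine the two moment bounds from Lemma \ref{lm3} and Lemma \ref{lm4} with Chebyshev's inequality, which is the standard recipe for converting moment estimates into stochastic permanence. Note that the hypothesis $p_{2\inf}>0$ is exactly what is needed to invoke Lemma \ref{lm4}, while Lemma \ref{lm3} requires no extra assumption.

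First, I would establish the upper bound $\liminf_{t\to\infty}\pr\{x_2(t)\le H\}\ge1-\eps$. By Lemma \ref{lm3}, $\limsup_{t\to\infty}\E[x_2(t)]\le K_2$. Given $\eps>0$, set $H=H(\eps)=K_2/\eps$. Then Markov's inequality gives
\begin{align}
\pr\{x_2(t)>H\}\le \frac{\E[x_2(t)]}{H},\nonumber
\end{align}
so $\limsup_{t\to\infty}\pr\{x_2(t)>H\}\le K_2/H=\eps$, which yields the desired bound for the complementary event.

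Next, for the lower bound, I would apply Lemma \ref{lm4}: under the assumption $p_{2\inf}>0$, there is some $0<\theta<1$ for which $\limsup_{t\to\infty}\E[(1/x_2(t))^\theta]\le K(\theta)$. Given $\eps>0$, choose $h=h(\eps)>0$ small enough so that $K(\theta)\, h^{\theta}<\eps$. Since $x_2(t)<h$ if and only if $(1/x_2(t))^\theta>(1/h)^\theta$, Chebyshev's inequality gives
\begin{align}
\pr\{x_2(t)<h\}=\pr\!\left\{\left(\frac{1}{x_2(t)}\right)^{\!\theta}>\frac{1}{h^\theta}\right\}\le h^{\theta}\,\E\!\left[\left(\frac{1}{x_2(t)}\right)^{\!\theta}\right],\nonumber
\end{align}
so $\limsup_{t\to\infty}\pr\{x_2(t)<h\}\le h^{\theta}K(\theta)<\eps$, and hence $\liminf_{t\to\infty}\pr\{x_2(t)\ge h\}\ge 1-\eps$.

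Combining these two estimates matches the definition of stochastic permanence for $x_2(t)$. There is no serious obstacle here: all the analytic work has already been absorbed into Lemma \ref{lm3} (controlling positive moments) and Lemma \ref{lm4} (controlling the negative moment $\E[x_2^{-\theta}]$), the latter being the place where the sign condition $p_{2\inf}>0$ is actually exploited to make the drift of $1/x_2$ suitably negative. The only point requiring care in writing up is to make explicit the choice of $\theta$ supplied by Lemma \ref{lm4} and the dependence $h=h(\eps)$, $H=H(\eps)$; everything else is a direct application of Chebyshev.
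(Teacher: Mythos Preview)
Your proposal is correct and follows essentially the same route as the paper: apply Lemma~\ref{lm3} with Chebyshev/Markov to get the upper barrier $H=K_2/\eps$, and apply Lemma~\ref{lm4} (where the hypothesis $p_{2\inf}>0$ enters) with Chebyshev on $(1/x_2)^{\theta}$ to get the lower barrier $h$. The paper fixes the explicit choice $h=(\eps/K(\theta))^{1/\theta}$, but your qualitative choice of $h$ with $K(\theta)h^{\theta}<\eps$ is equivalent.
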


\begin{proof}
From Lemma \ref{lm3} we have estimate
\begin{align}
\limsup_{t\to\infty}E[x_2(t)]\le K.\nonumber
\end{align}
Thus for any given $\eps > 0$, let $H = K/\eps$, by virtue of Chebyshev's inequality, we can derive that
\begin{align}
\limsup\limits_{t\to\infty}\pr\{x_2(t)\ge H\}\le \frac{1}{H}\limsup_{t\to\infty}E[x_2(t)]\le\eps.\nonumber
\end{align}
Consequently $\liminf\limits_{t\to\infty}\pr\{x_2(t)\le H\}\ge1-\eps$.

From Lemma \ref{lm4} we have estimate
\begin{align}
\limsup_{t\to\infty}\E\left[\left(\frac{1}{x_2(t)}\right)^{\theta}\right] \le K(\theta),\ 0<\theta<1.\nonumber
\end{align}
For any given $\eps > 0$, let $h = (\eps/K(\theta))^{1/\theta}$, then by Chebyshev's inequality, we have
\begin{align}
\limsup\limits_{t\to\infty}\pr\{x_2(t)< h\}\le \limsup\limits_{t\to\infty}\pr\left\{\left(\frac{1}{x_2(t)}\right)^{\theta}>h^{-\theta}\right\}\nonumber\\\le
h^\theta\limsup\limits_{t\to\infty}\E\left[\left(\frac{1}{x_2(t)}\right)^{\theta}\right]\le\eps.\nonumber
\end{align}
Consequently $\liminf\limits_{t\to\infty}\pr\{x_2(t)\ge h\}\ge1-\eps$.
\end{proof}

\begin{thm}\label{thm3}
 If the predator is absent, i.e. $x_2(t)=0$ a.s., and $p_{1\inf}>0$, where $p_1(t)=a_1(t)-\beta_1(t)$, then for any initial value $x_{10}>0$, the prey population density $x_1(t)$ is stochastically permanent.
\end{thm}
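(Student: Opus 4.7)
When the predator is absent, the second equation of \eqref{eq3} trivially holds, and the system reduces to the one-dimensional stochastic logistic equation
\begin{align*}
dx_1(t)=x_1(t)\bigl[a_1(t)-b_1(t)x_1(t)\bigr]dt+\sigma_1(t)x_1(t)dw_1(t)\\
+\int_{\R}\gamma_1(t,z)x_1(t)\tilde\nu_1(dt,dz)+\int_{\R}\delta_1(t,z)x_1(t)\nu_2(dt,dz).
\end{align*}
The plan is to establish the two one-sided bounds in probability separately, exactly as in the preceding theorem for $x_2$. The upper bound is immediate: by Lemma \ref{lm3} with $p=1$ we have $\limsup_{t\to\infty}\E[x_1(t)]\le K_1(1)$, so fixing $\eps\in(0,1)$ and setting $H=K_1(1)/\eps$ and applying Chebyshev's inequality yields $\liminf_{t\to\infty}\pr\{x_1(t)\le H\}\ge 1-\eps$.

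The substantive work is the lower bound, for which I would prove an analogue of Lemma \ref{lm4}: under the hypothesis $p_{1\inf}>0$,
\begin{align*}
\limsup_{t\to\infty}\E\!\left[\left(\tfrac{1}{x_1(t)}\right)^{\!\theta}\right]\le \tilde K(\theta), \qquad 0<\theta<1 \text{ sufficiently small},
\end{align*}
from which $\liminf_{t\to\infty}\pr\{x_1(t)\ge h\}\ge 1-\eps$ follows by Chebyshev with $h=(\eps/\tilde K(\theta))^{1/\theta}$. To do this I would set $U(t)=1/x_1(t)$ and apply It\^o's formula. The difference from the proof of Lemma \ref{lm4} is cosmetic: in place of the predation term $c_2(s)x_2/(m(s)+x_1)$ one has the logistic term $b_1(s)x_1(s)=b_1(s)/U(s)$, and there is no cross-coupling to $x_2$. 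After repeating the computation leading to (\ref{eq27}), the drift bound becomes
\begin{align*}
J(s)\le U^2(s)\bigl[-p_1(s)+\Delta(\theta)\bigr]+K_1(\theta)U(s)+K_2(\theta),
\end{align*}
where $\Delta(\theta)\to 0$ as $\theta\downarrow0$ by exactly the same limit argument as in Lemma \ref{lm4}; the $b_1/U$ contribution enters only through the lower-order $K_1(\theta)U(s)$ and $K_2(\theta)$ terms since $(1+U)U\cdot b_1/U=b_1(1+U)$ is affine in $U$.

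The main obstacle is verifying that the condition $p_{1\inf}>0$ is indeed the right one to guarantee $K_0(\theta):=\inf_{t\ge 0}[p_1(t)-\Delta(\theta)]>0$ for some small $\theta\in(0,1)$, and bookkeeping the several $\Pi_1,\Pi_2$ integrals arising from the compensated and non-compensated Poisson terms to ensure $\Delta(\theta)\to 0$. Once $K_0(\theta)>0$ is secured, one chooses $\la=\la(\theta)>0$ so small that $K_0(\theta)-\la/\theta>0$, multiplies by $e^{\la t}$, applies It\^o's formula to $e^{\la t}(1+U(t))^\theta$, stops at the sequence $\tau_n$ from Theorem \ref{thm1}, takes expectations to kill the martingale parts, passes to $n\to\infty$ and divides by $e^{\la t}$ to obtain the moment bound, following verbatim the steps between (\ref{eq28}) and (\ref{eq31}). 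Combining the two Chebyshev estimates completes the proof.
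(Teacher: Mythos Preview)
Your proposal is correct and follows essentially the same approach as the paper: the upper bound via Lemma~\ref{lm3} and Chebyshev, and the lower bound by writing $U(t)=1/x_1(t)$, applying It\^{o}'s formula, and then repeating the argument of Lemma~\ref{lm4} (with the logistic term $b_1(s)x_1(s)=b_1(s)/U(s)$ replacing the predation term and contributing only to the affine-in-$U$ part of $J$). The paper is in fact less detailed than you are---it simply writes the It\^{o} formula for $U$ and states ``using the same arguments as in the proof of Lemma~\ref{lm4}''---so your sketch more than suffices.
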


\begin{proof}
From Lemma \ref{lm3} we have estimate
\begin{align}
\limsup_{t\to\infty}E[x_1(t)]\le K.\nonumber
\end{align}
Thus for any given $\eps > 0$, let $H = K/\eps$, by virtue of Chebyshev's inequality, we can derive that
\begin{align}
\limsup\limits_{t\to\infty}\pr\{x_1(t)\ge H\}\le \frac{1}{H}\limsup_{t\to\infty}E[x_1(t)]\le\eps.\nonumber
\end{align}
Consequently $\liminf\limits_{t\to\infty}\pr\{x_1(t)\le H\}\ge1-\eps$.

For the process $U(t)=1/x_1(t)$ by the It\^{o} formula we have
\begin{align}
U(t)=U(0)+\int\limits_0^t U(s) \left[\rule{0pt}{20pt}b_1(s)x_1(s)-a_1(s)+\sigma_1^2(s)\right.\nonumber\\ \left.+\int\limits_{\R}\frac{\gamma_1^2(s,z)}{1+\gamma_1(s,z)}\Pi_1(dz)\right]ds
-\int\limits_0^tU(s)\sigma_1(s)dw_1(s)\nonumber\\-\int\limits_0^t\!\!\!\!\int\limits_{\R}U(s)\frac{\gamma_1(s,z)}{1+\gamma_1(s,z)}\tilde\nu_1(ds,dz)-
\int\limits_0^t\!\!\!\int\limits_{\R}U(s)\frac{\delta_1(s,z)}{1+\delta_1(s,z)}\nu_2(ds,dz).
\nonumber
\end{align}
Then, using the same arguments as in the proof of Lemma \ref{lm4} we can derive the estimate
\begin{align}
\limsup_{t\to\infty}\E\left[\left(\frac{1}{x_1(t)}\right)^{\theta}\right] \le K(\theta),\ 0<\theta<1,\nonumber
\end{align}
For any given $\eps > 0$, let $h = (\eps/K(\theta))^{1/\theta}$, then by Chebyshev's inequality, we have
\begin{align}
\limsup\limits_{t\to\infty}\pr\{x_1(t)< h\}=\limsup\limits_{t\to\infty}\pr\left\{\left(\frac{1}{x_1(t)}\right)^{\theta}>h^{-\theta}\right\}\nonumber\\\le
h^\theta\limsup\limits_{t\to\infty}\E\left[\left(\frac{1}{x_1(t)}\right)^{\theta}\right]\le\eps.\nonumber
\end{align}
Consequently $\liminf\limits_{t\to\infty}\pr\{x_1(t)\ge h\}\ge1-\eps$.
\end{proof}
\begin{remark}
If the predator is absent, i.e. $x_2(t)=0$ a.s., then the equation for the prey $x_1(t)$ has the logistic form. So Theorem \ref{thm3} gives us the sufficient conditions for the stochastic permanence of the solution to the stochastic non-autonomous logistic equation disturbed by white noise, centered and non-centered Poisson noises.
\end{remark}

\begin{defin}
The solution $X(t)=(x_1(t),x_2(t))$, $t\ge0$ to the equation $(\ref{eq3})$ will be said extinct if for every initial data $X_0\in\R^2_{+}$, we have $\lim_{t\to\infty}x_i(t)=0$ almost surely (a.s.), $i=1,2$.
\end{defin}

\begin{thm}\label{thm5}
 If
\begin{align}
{\bar p}^{*}_i=\limsup_{t\to\infty}\frac{1}{t}\int\limits_0^tp_{i}(s)ds<0,\ \hbox{where}\
p_{i}(t)=a_{i}(t)-\beta_{i}(t),\ i=1,2, \nonumber
\end{align}
then the solution $X(t)$ to the equation $(\ref{eq3})$ with initial condition $X_0\in\R^2_{+}$ will be extinct.
\end{thm}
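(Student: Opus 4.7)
The plan is to apply It\^o's formula to $\ln x_i(t)$ for each $i = 1, 2$ and read off the asymptotic logarithmic growth rate. Writing the SDE $(\ref{eq3})$ in its continuous/pure-jump form, decomposing $d\nu_2 = d\tilde\nu_2 + dt\,\Pi_2(dz)$, and grouping the It\^o correction $\sigma_i^2(t)/2$ together with $\int_{\R}[\gamma_i(t,z) - \ln(1+\gamma_i(t,z))]\Pi_1(dz)$ and $-\int_{\R}\ln(1+\delta_i(t,z))\Pi_2(dz)$ that come from the jump terms, one recognises exactly $\beta_i(t)$, so the drift of $d\ln x_i(t)$ becomes $p_i(t) - b_i(t)x_i(t) - c_i(t)x_2(t)/(m(t)+x_1(t))$. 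Integrating yields
\begin{align*}
\ln x_i(t) = \ln x_{i0} + \int_0^t p_i(s)\,ds - \int_0^t b_i(s)x_i(s)\,ds \\ - \int_0^t \frac{c_i(s)x_2(s)}{m(s)+x_1(s)}\,ds + M_i(t),
\end{align*}
where
\begin{align*}
M_i(t) = \int_0^t \sigma_i(s)\,dw_i(s) + \int_0^t\!\!\int_{\R}\ln(1+\gamma_i(s,z))\,\tilde\nu_1(ds,dz) \\ + \int_0^t\!\!\int_{\R}\ln(1+\delta_i(s,z))\,\tilde\nu_2(ds,dz)
\end{align*}
is a local martingale.

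Since $b_i(s) \ge 0$ (with $b_2 \equiv 0$), $c_i(s) > 0$ and $x_1(s), x_2(s), m(s)$ are positive a.s. by Theorem \ref{thm1}, the two subtracted integrals on the right-hand side are nonpositive, so dividing by $t$ gives
\begin{align*}
\frac{\ln x_i(t)}{t} \le \frac{\ln x_{i0}}{t} + \frac{1}{t}\int_0^t p_i(s)\,ds + \frac{M_i(t)}{t}.
\end{align*}
The key step is to show $M_i(t)/t \to 0$ a.s. Under Assumption \ref{ass1} the integrands $\sigma_i(s)$, $\ln(1+\gamma_i(s,z))$ and $\ln(1+\delta_i(s,z))$ are bounded and $\Pi_1(\R), \Pi_2(\R) < \infty$, so the predictable quadratic variation $\langle M_i\rangle_t$ is bounded above by $Ct$ for some constant $C>0$. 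The strong law of large numbers for local martingales with jumps then gives $\lim_{t\to\infty} M_i(t)/t = 0$ a.s. Taking $\limsup$ and using the hypothesis $\bar p_i^* < 0$ yields
\begin{align*}
\limsup_{t\to\infty} \frac{\ln x_i(t)}{t} \le \bar p_i^* < 0 \quad \hbox{a.s.},
\end{align*}
so $\ln x_i(t) \to -\infty$ and hence $x_i(t) \to 0$ a.s.\ as $t \to \infty$ for $i = 1, 2$, which is exactly extinction.

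The expected main obstacle is making the $M_i(t)/t \to 0$ step airtight without appealing to an external SLLN for general semimartingales with jumps. The cleanest self-contained route is to replicate the Borel-Cantelli/exponential-inequality scheme used in Lemmas \ref{lm1} and \ref{lm2} (taking $T = k\tau$, $\kappa = e^{-k\tau}$, $\beta = \theta e^{k\tau}\ln k$ with $\theta > 1$, and then letting $\theta \downarrow 1$, $\tau \downarrow 0$) applied to each of the three martingale pieces of $M_i(t)$. Everything else in the argument is routine bookkeeping with It\^o's formula and the definitions of $\beta_i$ and $p_i$.
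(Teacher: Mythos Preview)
Your proof is correct and follows essentially the same route as the paper: apply It\^o's formula to $\ln x_i(t)$, drop the nonnegative dissipative terms $b_i(s)x_i(s)$ and $c_i(s)x_2(s)/(m(s)+x_1(s))$, bound the quadratic variation of $M_i$ linearly in $t$, and invoke the strong law of large numbers for local martingales to kill $M_i(t)/t$. The paper handles your ``main obstacle'' exactly as you first suggest, by citing Lipster's SLLN \cite{Lip} directly rather than reproducing the exponential-inequality/Borel--Cantelli scheme, so your alternative self-contained route is unnecessary here (though not wrong).
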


\begin{proof}
By the It\^{o} formula, we have
\begin{align}\label{eq33}
d\ln x_i(t)=\left[a_{i}(t)-b_i(t)x_i(t)-\frac{c_i(t)x_2(t)}{m(t)+x_1(t)}-\beta_i(t)\right]dt+dM_i(t)\nonumber\\ \le
[a_i(t)-\beta_i(t)]dt+dM_i(t),\ i=1,2,
\end{align}
where the martingale
\begin{align}\label{eq34}
M_i(t)=\int\limits_0^t\sigma_i(s)dw_i(s)+\int\limits_0^t\!\!\int\limits_{\R}\ln(1+\gamma_i(s,z))\tilde\nu_1(ds,dz) \nonumber\\+
\int\limits_0^t\!\!\int\limits_{\R}\ln(1+\delta_i(s,z))\tilde\nu_2(ds,dz),\ i=1,2,
\end{align}
has quadratic variation
\begin{align}
\langle M_i,M_i\rangle(t)=\int\limits_0^t\sigma^2_i(s)ds+\int\limits_0^t\!\!\int\limits_{\R}\ln^2(1+\gamma_i(s,z))\Pi_1(dz)ds\nonumber\\+
\int\limits_0^t\!\!\int\limits_{\R}\ln^2(1+\delta_i(s,z))\Pi_2(dz)ds\le Kt, \ i=1,2.\nonumber
\end{align}

Then the strong law of large numbers for local martingales (\cite{Lip}) yields $\lim_{t\to\infty}M_i(t)/t=0, i=1,2$ a.s. Therefore, from $(\ref{eq33})$ we obtain
$$
\limsup_{t\to\infty}\frac{\ln x_i(t)}{t}\le\limsup_{t\to\infty}\frac{1}{t}\int\limits_0^t p_{i}(s)ds<0, \quad\hbox{a.s.}
$$
So $\lim_{t\to\infty}x_i(t)=0, i=1,2$ a.s.
\end{proof}

\begin{defin}[\cite{Liu}]
The population density $x(t)$ will be said non-persistent in the mean if
\begin{align}
\lim_{t\to\infty}\frac{1}{t}\int_0^tx(s)ds=0\ \hbox{a.s.}
\nonumber
\end{align}
\end{defin}

\begin{thm}
If ${\bar p}_1^{*}=0$,
then the prey population density $x_1(t)$ with initial condition $x_{10}>0$ will be non-persistent in the mean.
\end{thm}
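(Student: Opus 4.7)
The plan is to exploit the It\^o representation of $\ln x_1(t)$ already derived in the proof of Theorem~\ref{thm5}, combined with the classical exponential substitution that turns the inequality into a Bernoulli-type comparison for the cumulative integral $\int_0^t x_1(s)\,ds$. From $(\ref{eq33})$ in full (i.e., keeping the $-b_1(s)x_1(s)$ term rather than discarding it), dropping only the non-positive predator term $-c_1(s)x_2(s)/(m(s)+x_1(s))$ and replacing $b_1(s)$ by $b_{1\inf}>0$, I obtain
\begin{align}
\ln x_1(t) \le \ln x_{10} + \int_0^t p_1(s)\,ds - b_{1\inf}\int_0^t x_1(s)\,ds + M_1(t),\nonumber
\end{align}
where $M_1(t)$ is the martingale in $(\ref{eq34})$. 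Its quadratic variation is bounded by $Kt$, so by the strong law of large numbers for local martingales $M_1(t)/t\to 0$ a.s., exactly as in Theorem~\ref{thm5}. Together with the hypothesis $\bar p_1^{*}=0$, this gives, for every fixed $\eps>0$, a random time $T(\omega)$ such that $t^{-1}\bigl(\int_0^t p_1(s)\,ds+M_1(t)\bigr)\le \eps$ for all $t\ge T$.

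Next I would set $h(t)=\int_0^t x_1(s)\,ds$, so $h'(t)=x_1(t)$, and rewrite the previous bound (exponentiated) as
\begin{align}
\frac{d}{dt}\Bigl(e^{b_{1\inf} h(t)}\Bigr)=b_{1\inf} x_1(t) e^{b_{1\inf} h(t)}\le b_{1\inf}\,x_{10}\,e^{\eps t},\quad t\ge T.\nonumber
\end{align}
Integrating from $T$ to $t$ yields $e^{b_{1\inf} h(t)}\le e^{b_{1\inf} h(T)}+\frac{b_{1\inf} x_{10}}{\eps}(e^{\eps t}-e^{\eps T})$. Taking logarithms, dividing by $t$ and sending $t\to\infty$, the dominant term on the right is $\eps t$, so
\begin{align}
b_{1\inf}\limsup_{t\to\infty}\frac{1}{t}\int_0^t x_1(s)\,ds\le \eps,\nonumber
\end{align}
and letting $\eps\downarrow 0$ (combined with the trivial lower bound $\liminf_{t\to\infty} t^{-1}\int_0^t x_1(s)\,ds\ge 0$ coming from positivity of $x_1$ guaranteed by Theorem~\ref{thm1}) finishes the proof.

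No step seems genuinely delicate: Assumption~\ref{ass1} supplies $b_{1\inf}>0$ and the boundedness needed for the quadratic-variation estimate, and the exponentiation trick is the standard Liu-type device for non-persistence in the mean. The only point requiring a little care is that the exceptional null set on which either $M_1(t)/t\not\to 0$ or the random threshold $T(\omega)$ fails to exist must be chosen as a countable union over a sequence $\eps_k\downarrow 0$, but this is routine.
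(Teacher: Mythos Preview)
Your proposal is correct and follows essentially the same route as the paper's proof: the same inequality $\ln x_1(t)\le \ln x_{10}+\int_0^t p_1(s)\,ds - b_{1\inf}\int_0^t x_1(s)\,ds + M_1(t)$ is derived from $(\ref{eq33})$, the same martingale SLLN is invoked, and the same exponential substitution $y_1(t)=\int_0^t x_1(s)\,ds$ leading to $e^{b_{1\inf}y_1(t)}y_1'(t)\le x_{10}e^{\eps t}$ is used to reach the conclusion. The only cosmetic difference is that the paper phrases the $\eps$-step via a set $\Omega_\eps$ with $\pr(\Omega_\eps)\ge 1-\eps$, whereas you work directly with a random threshold $T(\omega)$ and note the countable-union point; your formulation is arguably slightly more careful on this score, but the substance is identical.
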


\begin{proof}
From the first equality in $(\ref{eq33})$ we have for $i=1$
\begin{align}\label{eq35}
\ln x_1(t)\le \ln x_{10}+\int\limits_0^t p_{1}(s)ds
-b_{1\inf}\int\limits_0^tx_1(s)ds+M_1(t),
\end{align}
where martingale $M_1(t)$ is defined in $(\ref{eq34})$. From the definition of ${\bar p}^{*}_1$ and the strong law of large numbers for $M_1(t)$ it follows, that $\forall \eps>0$, $\exists t_0\ge0$, and $\exists\Omega_{\eps}\subset\Omega$, with $\pr(\Omega_{\eps})\ge1-\eps$, such that
\begin{align}
\frac{1}{t}\int\limits_0^t p_{1}(s)ds\le{\bar p}^{*}_1+\frac{\eps}{2},\ \frac{M_1(t)}{t}\le\frac{\eps}{2},\ \forall t\ge t_0,\ \omega\in\Omega_{\eps}.
\nonumber
\end{align}
So, from $(\ref{eq35})$ we derive
\begin{align}\label{eq36}
\ln x_1(t)- \ln x_{10}\le t({\bar p}^{*}_1+\eps)-b_{1\inf}\int\limits_0^tx_1(s)ds\nonumber\\
=t\eps-b_{1\inf}\int\limits_0^tx_1(s)ds, \forall t\ge t_0,\ \omega\in\Omega_{\eps}.
\end{align}
Let $y_1(t) =\int_0^tx_1(s)ds$, then from $(\ref{eq36})$ we have
\begin{align}
\ln\left(\frac{dy_1(t)}{dt}\right)\le\eps t-b_{1\inf} y_1(t)+\ln x_{10}\nonumber\\ \Rightarrow
e^{b_{1\inf}y_1(t)}\frac{dy_1(t)}{dt}\le x_{10}e^{\eps t}, \forall t\ge t_0,\ \omega\in\Omega_{\eps}.
\nonumber
\end{align}
By integrating of last inequality from $t_0$ to $t$ we obtain
\begin{align}
e^{b_{1\inf}y_1(t)}\le \frac{b_{1\inf}x_{10}}{\eps}\left(e^{\eps t}-e^{\eps t_0}\right)
+e^{b_{1\inf}y_1(t_0)},\ \forall t\ge t_0,\ \omega\in\Omega_{\eps}.
\nonumber
\end{align}
So
\begin{align}
y_1(t)\le \frac{1}{b_{1\inf}}\ln\left[e^{b_{1\inf}y_1(t_0)}+\frac{b_{1\inf}x_{10}}{\eps}\left(e^{\eps t}-e^{\eps t_0}\right)
\right], \ \forall t\ge t_0,\ \omega\in\Omega_{\eps},
\nonumber
\end{align}
and therefore
\begin{align}
\limsup_{t\to\infty}\frac{1}{t}\int\limits_0^tx_1(s)ds\le\frac{\eps}{b_{1\inf}}, \ \forall  \omega\in\Omega_{\eps}.
\nonumber
\end{align}
Since $\eps>0$ is arbitrary and $x_1(t)>0$ a.s., we have
\begin{align}\lim_{t\to\infty}\frac{1}{t}\int\limits_{0}^{t}x_1(s)ds=0\ a.s.\nonumber
\end{align}
\end{proof}

\begin{thm}
If ${\bar p}_2^{*}=0$ and ${\bar p}_1^{*}<0$,
then the predator population density $x_2(t)$ with initial condition $x_{20}>0$ will be non-persistent in the mean.
\end{thm}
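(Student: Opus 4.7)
The plan is to imitate the proof of the preceding theorem, but with the role of the self-limiting term $-b_1(t)x_1(t)$ (which provided the crucial coercivity $-b_{1\inf}\int_0^t x_1$) played by the interaction term $-\frac{c_2(t)x_2(t)}{m(t)+x_1(t)}$. The starting point is the second component of $(\ref{eq33})$, which reads
\begin{align}
\ln x_2(t)=\ln x_{20}+\int_0^t p_2(s)\,ds-\int_0^t\frac{c_2(s)x_2(s)}{m(s)+x_1(s)}\,ds+M_2(t),\nonumber
\end{align}
where $M_2(t)$ is the local martingale from $(\ref{eq34})$ with $i=2$, whose quadratic variation is bounded by $Kt$, so the strong law of large numbers gives $M_2(t)/t\to 0$ a.s.

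The key new ingredient is to obtain a uniform positive lower bound for $c_2(s)/(m(s)+x_1(s))$ for all sufficiently large $s$. This is exactly where the hypothesis $\bar p_1^{*}<0$ enters: applying Theorem \ref{thm5} componentwise (its proof treats each $i$ separately), we get $\lim_{t\to\infty}x_1(t)=0$ a.s. Consequently, for any fixed $\eps\in(0,1)$, there exists a random $t_0=t_0(\omega)$ such that simultaneously for all $t\ge t_0$ we have $x_1(t)<\eps$, $\frac{1}{t}\int_0^t p_2(s)ds\le \eps/2$ (using $\bar p_2^{*}=0$), and $M_2(t)/t\le \eps/2$ on a set $\Omega_\eps$ of probability at least $1-\eps$. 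On this set, for $s\ge t_0$, $\frac{c_2(s)}{m(s)+x_1(s)}\ge B:=\frac{c_{2\inf}}{m_{\sup}+\eps}>0$.

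Dropping the (nonnegative) integrand over $[0,t_0]$ and inserting the lower bound $B$ on $[t_0,t]$, I arrive at
\begin{align}
\ln x_2(t)\le \ln x_{20}+\eps t-B\int_{t_0}^t x_2(s)\,ds,\qquad t\ge t_0,\ \omega\in\Omega_\eps.\nonumber
\end{align}
Setting $y_2(t)=\int_{t_0}^t x_2(s)\,ds$ gives the ODE inequality $e^{By_2(t)}\dot y_2(t)\le x_{20}e^{\eps t}$, which upon integration from $t_0$ to $t$ yields
\begin{align}
y_2(t)\le \frac{1}{B}\ln\left[1+\frac{Bx_{20}}{\eps}\bigl(e^{\eps t}-e^{\eps t_0}\bigr)\right].\nonumber
\end{align}
Dividing by $t$, sending $t\to\infty$, and adding the trivial $O(1/t)$ contribution from $\int_0^{t_0}x_2$ gives $\limsup_{t\to\infty}\tfrac{1}{t}\int_0^t x_2(s)\,ds\le \eps/B$ on $\Omega_\eps$. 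Since $\eps>0$ is arbitrary and $x_2(t)>0$ a.s., the desired a.s.\ conclusion $\lim_{t\to\infty}\tfrac{1}{t}\int_0^t x_2(s)ds=0$ follows.

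The main (minor) obstacle is that the natural damping in the $x_2$-equation is the functional-response term $c_2(t)x_2/(m(t)+x_1)$ rather than a clean $-b_2 x_2^2$: its effective rate constant degenerates to zero when $x_1$ is large. The condition $\bar p_1^{*}<0$ is used precisely to kill $x_1$ and restore a nontrivial lower bound, after which the integration trick from the preceding theorem applies verbatim.
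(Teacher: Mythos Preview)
Your proof is correct and follows essentially the same approach as the paper: both use $\bar p_1^{*}<0$ (via Theorem~\ref{thm5}) to force $x_1(t)\to 0$ a.s., thereby securing a uniform positive lower bound on the damping coefficient $c_2(s)/(m(s)+x_1(s))$ for large $s$, and then run the same integration trick as in the preceding theorem. The only cosmetic difference is that the paper rewrites $\frac{1}{m(s)+x_1(s)}=\frac{1}{m(s)}\bigl(1-\frac{x_1(s)}{m(s)+x_1(s)}\bigr)$ and bounds $\frac{x_1(s)}{m_{\sup}+x_1(s)}\le\eps$ to get an effective rate $c(1-\eps)$ with $c=c_{2\inf}/m_{\sup}$, whereas you bound $\frac{c_2(s)}{m(s)+x_1(s)}\ge \frac{c_{2\inf}}{m_{\sup}+\eps}$ directly; the two bounds are equivalent and lead to the same conclusion.
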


\begin{proof}
From the first equality in $(\ref{eq33})$ with $i=2$ we have for $c=c_{2\inf}/m_{\sup}$
\begin{align}\label{eq37}
\ln x_2(t)\le \ln x_{20}+\int\limits_0^t p_{2}(s)ds
-c_{2\inf}\int\limits_0^t\frac{x_2(s)}{m(s)+x_1(s)}ds+M_2(t)\nonumber\\
=\ln x_{20}\!+\!\int\limits_0^t p_{2}(s)ds\!
-\!c_{2\inf}\int\limits_0^t\frac{1}{m(s)}\left[x_2(s)\!-\!\frac{x_1(s)x_2(s)}{m(s)+x_1(s)}\right]ds\!+\!M_2(t)\nonumber
\\ \le\ln x_{20}+\int\limits_0^t p_{2}(s)ds-c\int\limits_0^tx_2(s)ds+c\int\limits_0^t\frac{x_1(s)x_2(s)}{m_{\sup}+x_1(s)}ds+M_2(t),
\end{align}
where martingale $M_2(t)$ is defined in $(\ref{eq34})$. From Theorem \ref{thm5}, the definition of ${\bar p}^{*}_2$ and the strong law of large numbers for $M_2(t)$ it follows, that $\forall \eps>0$, $\exists t_0\ge0$, and $\exists\Omega_{\eps}\subset\Omega$ with $\pr(\Omega_{\eps})\ge1-\eps$, such that
\begin{align}
\frac{1}{t}\int\limits_0^t p_{2}(s)ds\le{\bar p}^{*}_2+\frac{\eps}{2},\ \frac{M_2(t)}{t}\le\frac{\eps}{2},\
\frac{x_1(t)}{m_{\sup}+x_1(t)}\le\eps,\ \forall t\ge t_0,\ \omega\in\Omega_{\eps}.
\nonumber
\end{align}
So, from $(\ref{eq37})$ we derive
\begin{align}\label{eq38}
\ln x_2(t)- \ln x_{20}\le t({\bar p}^{*}_2+\eps)-c(1-\eps)\int\limits_{t_0}^tx_2(s)ds\nonumber\\
=t\eps-c(1-\eps)\int\limits_{t_0}^tx_2(s)ds, \forall t\ge t_0,\ \omega\in\Omega_{\eps}.
\end{align}
Let $y_2(t) =\int_{t_0}^tx_2(s)ds$, then from $(\ref{eq38})$ we have
\begin{align}
\ln\left(\frac{dy_2(t)}{dt}\right)\le\eps t-c(1-\eps) y_2(t)+\ln x_{20}\nonumber\\ \Rightarrow
e^{c(1-\eps)y_2(t)}\frac{dy_2(t)}{dt}\le x_{20}e^{\eps t}, \forall t\ge t_0,\ \omega\in\Omega_{\eps}.
\nonumber
\end{align}
By integrating of last inequality from $t_0$ to $t$ we obtain
\begin{align}
e^{c(1-\eps)y_2(t)}\le \frac{c(1-\eps)x_{20}}{\eps}\left(e^{\eps t}-e^{\eps t_0}\right)
+1,\ \forall t\ge t_0,\ \omega\in\Omega_{\eps}.
\nonumber
\end{align}
So
\begin{align}
y_2(t)\le \frac{1}{c(1-\eps)}\ln\left[1+\frac{c(1-\eps)x_{20}}{\eps}\left(e^{\eps t}-e^{\eps t_0}\right)
\right], \ \forall t\ge t_0,\ \omega\in\Omega_{\eps},
\nonumber
\end{align}
and therefore
\begin{align}
\limsup_{t\to\infty}\frac{1}{t}\int\limits_0^tx_2(s)ds\le\frac{\eps}{c(1-\eps)}, \ \forall  \omega\in\Omega_{\eps}.
\nonumber
\end{align}
Since $\eps>0$ is arbitrary and $x_2(t)>0$ a.s., we have
\begin{align}\lim_{t\to\infty}\frac{1}{t}\int\limits_{0}^{t}x_2(s)ds=0\ a.s.\nonumber
\end{align}
\end{proof}

\begin{defin}[\cite{Liu}]
The population density $x(t)$ will be said weakly persistent in the mean if
\begin{align}
{\bar x}^{*}=\limsup_{t\to\infty}\frac{1}{t}\int_0^tx(s)ds>0\ \hbox{a.s.}
\nonumber
\end{align}
\end{defin}

\begin{thm}\label{thm8}
If ${\bar p}_2^{*}>0$, then the predator population density $x_2(t)$ with initial condition $x_{20}>0$ will be weakly persistent in the mean. 
\end{thm}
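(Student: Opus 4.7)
The plan is to argue by contradiction, comparing the logarithmic growth rate of $x_2(t)$ against its Cesaro average. By It\^o's formula, already recorded in $(\ref{eq33})$, the predator density satisfies
\begin{align*}
\frac{\ln x_2(t)-\ln x_{20}}{t}=\frac{1}{t}\int_0^t p_2(s)ds-\frac{1}{t}\int_0^t\frac{c_2(s)x_2(s)}{m(s)+x_1(s)}ds+\frac{M_2(t)}{t},
\end{align*}
where $M_2(t)$ is the local martingale from $(\ref{eq34})$. Under Assumption~\ref{ass1} the bound on $\langle M_2,M_2\rangle(t)\le Kt$ derived in the proof of Theorem~\ref{thm5} is in force, so the strong law of large numbers for local martingales yields $M_2(t)/t\to0$ a.s.

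Suppose, for contradiction, that there exists a measurable event $B\subset\Omega$ with $\pr(B)>0$ on which $\bar x_2^{*}=\limsup_{t\to\infty}\frac{1}{t}\int_0^t x_2(s)ds=0$. Since $x_2(s)>0$, this actually forces $\frac{1}{t}\int_0^tx_2(s)ds\to 0$ on $B$. Using Assumption~\ref{ass1} I dominate the Holling-response integrand by
\begin{align*}
0\le\frac{c_2(s)x_2(s)}{m(s)+x_1(s)}\le\frac{c_{2\sup}}{m_{\inf}}x_2(s),
\end{align*}
which shows that $\frac{1}{t}\int_0^t\frac{c_2(s)x_2(s)}{m(s)+x_1(s)}ds\to 0$ on $B$ as well.

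Taking $\limsup_{t\to\infty}$ of the identity in the first paragraph and collecting the three terms that vanish on $B$ (the initial-condition term, the Holling integral, and the martingale), I would obtain $\limsup_{t\to\infty}\ln x_2(t)/t=\bar p_2^{*}>0$ on $B$. This directly contradicts Lemma~\ref{lm2}, which gives $\limsup_{t\to\infty}\ln x_2(t)/t\le0$ almost surely. Hence $\pr(B)=0$ and $\bar x_2^{*}>0$ a.s., i.e.\ $x_2(t)$ is weakly persistent in the mean.

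The main obstacle is a clean separation of the three contributions on the right-hand side when passing to the $\limsup$ on the positive-probability event $B$. The step works precisely because Assumption~\ref{ass1} simultaneously delivers the $Kt$-bound on $\langle M_2,M_2\rangle$ (needed for the strong law of large numbers), the uniform lower bound $m(s)+x_1(s)\ge m_{\inf}>0$ (needed to control the functional response by $x_2(s)$), and the upper bound on $c_2$; without these, one cannot dismiss the Holling term and the martingale simultaneously, and the $\limsup$ identity would not reduce to $\bar p_2^{*}$.
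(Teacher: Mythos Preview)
Your argument is correct and follows essentially the same route as the paper: assume $\pr\{\bar x_2^{*}=0\}>0$, use the identity from $(\ref{eq33})$ for $i=2$, bound the functional-response term by $\frac{c_{2\sup}}{m_{\inf}}\cdot\frac{1}{t}\int_0^t x_2(s)ds$, invoke the strong law of large numbers for $M_2$, and conclude $\limsup_{t\to\infty}\frac{\ln x_2(t)}{t}\ge\bar p_2^{*}>0$ on a set of positive probability, contradicting Lemma~\ref{lm2}. The only cosmetic difference is that you keep the equality and argue the three vanishing terms yield $\limsup=\bar p_2^{*}$, whereas the paper passes to the inequality $\ge$ first; both give the same contradiction.
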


\begin{proof}
If the assertion of theorem is not true, then $\pr\{{\bar x}_2^{*}=0\}>0$. From the first equality in $(\ref{eq33})$ we get
\begin{align}
\frac{1}{t}(\ln x_2(t)-\ln x_{20})=\frac{1}{t}\int_0^tp_2(s)ds-\frac{1}{t}\int_0^t\frac{c_2(s)x_2(s)}{m(s)+x_1(s)}ds+\frac{M_2(t)}{t}\nonumber\\
\ge \frac{1}{t}\int_0^t p_2(s)ds-\frac{c_{2\sup}}{m_{\inf}t}\int_0^tx_2(s)ds+\frac{M_2(t)}{t},\nonumber
\end{align}
where martingale $M_2(t)$ is defined in $(\ref{eq34})$. For $\forall\omega\in\{\omega\in\Omega|\ {\bar x}_2^{*}=0\}$ in virtue strong law of large numbers for martingale $M_2(t)$ we have
\begin{align}
\limsup_{t\to\infty}\frac{\ln x_2(t)}{t}\ge {\bar p}_2^{*}>0.\nonumber
\end{align}
Therefore
\begin{align}
\pr\left\{\omega\in\Omega|\ \limsup_{t\to\infty}\frac{\ln x_2(t)}{t}>0\right\}>0.\nonumber
\end{align}
But from Lemma \ref{lm2} we have
\begin{align}
\pr\left\{\omega\in\Omega|\ \limsup_{t\to\infty}\frac{\ln x_2(t)}{t}\le0\right\}=1.\nonumber
\end{align}
This is a contradiction.
\end{proof}

\begin{thm}
If ${\bar p}_1^{*}>0$ and ${\bar p}_2^{*}<0$, then the prey population density $x_1(t)$ with initial condition $x_{10}>0$ will be weakly persistent in the mean.
\end{thm}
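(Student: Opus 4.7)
The plan is to argue by contradiction, mirroring the strategy of Theorem~\ref{thm8}. Suppose the conclusion fails, so $\pr\{{\bar x}_1^{*}=0\}>0$ where ${\bar x}_1^{*}=\limsup_{t\to\infty}\frac{1}{t}\int_0^t x_1(s)ds$. The key identity is the It\^o formula for $\ln x_1$, that is the first equality of $(\ref{eq33})$ with $i=1$, rewritten after integrating from $0$ to $t$ and dividing by $t$:
\begin{align*}
\frac{\ln x_1(t)-\ln x_{10}}{t}=\frac{1}{t}\!\int_0^t\!p_1(s)ds-\frac{1}{t}\!\int_0^t\!b_1(s)x_1(s)ds\\
-\frac{1}{t}\!\int_0^t\!\frac{c_1(s)x_2(s)}{m(s)+x_1(s)}ds+\frac{M_1(t)}{t},
\end{align*}
with $M_1(t)$ given by $(\ref{eq34})$.

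Next I would show that on the event $\{{\bar x}_1^{*}=0\}$ both subtracted terms on the right tend to $0$ a.s. The $b_1$-term is bounded by $b_{1\sup}\cdot\frac{1}{t}\int_0^t x_1(s)ds$ and hence vanishes directly from the hypothesis ${\bar x}_1^{*}=0$. The $c_1$-term is bounded by $\frac{c_{1\sup}}{m_{\inf}}\cdot\frac{1}{t}\int_0^t x_2(s)ds$, and I would show it vanishes using ${\bar p}_2^{*}<0$. Indeed, the argument in the proof of Theorem~\ref{thm5} specialized to $i=2$ is self-contained: it relies only on the one-sided bound $d\ln x_2(t)\le[a_2(t)-\beta_2(t)]dt+dM_2(t)$ (which uses merely that $b_2\equiv 0$ and the non-negativity of the $c_2$-term) together with the strong law of large numbers for the martingale $M_2(t)$. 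Hence ${\bar p}_2^{*}<0$ forces $\lim_{t\to\infty}x_2(t)=0$ a.s., and Ces\`aro averaging yields $\frac{1}{t}\int_0^t x_2(s)ds\to 0$ a.s.

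Combining these two vanishings with the strong law of large numbers for $M_1$, which gives $M_1(t)/t\to 0$ a.s.\ because $\langle M_1,M_1\rangle(t)$ is linearly bounded under Assumption~\ref{ass1}, the identity above becomes
\begin{align*}
\limsup_{t\to\infty}\frac{\ln x_1(t)}{t}=\limsup_{t\to\infty}\frac{1}{t}\int_0^t p_1(s)ds={\bar p}_1^{*}>0
\end{align*}
on a set of positive probability. This contradicts Corollary~\ref{cor1}, which asserts $\limsup_{t\to\infty}\frac{\ln x_1(t)}{t}\le 0$ a.s. The contradiction forces $\pr\{{\bar x}_1^{*}>0\}=1$, which is precisely weak persistence in the mean of $x_1$.

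The only non-routine point I expect is the \emph{decoupled} extinction statement for $x_2$: Theorem~\ref{thm5} is formally stated under the joint hypothesis ${\bar p}_i^{*}<0$ for $i=1,2$, whereas here one has only ${\bar p}_2^{*}<0$. This is resolved by inspecting the proof of Theorem~\ref{thm5}, which treats each coordinate separately and therefore delivers $x_2(t)\to 0$ a.s.\ from ${\bar p}_2^{*}<0$ alone. Everything else is standard Ces\`aro and SLLN bookkeeping, exactly parallel to Theorem~\ref{thm8}.
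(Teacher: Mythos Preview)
Your proposal is correct and follows essentially the same route as the paper: argue by contradiction, apply the It\^o identity for $\ln x_1$, kill the $b_1$-term via ${\bar x}_1^{*}=0$, kill the $c_1$-term via extinction of $x_2$ from ${\bar p}_2^{*}<0$, and contradict Corollary~\ref{cor1}. Your explicit observation that the $i=2$ case of Theorem~\ref{thm5} is self-contained (needing only ${\bar p}_2^{*}<0$) is in fact cleaner than the paper's own citation at that step, and your Ces\`aro passage to $\frac{1}{t}\int_0^t x_2(s)\,ds\to 0$ is equivalent to the paper's $\eps$--$\Omega_\eps$ formulation.
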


\begin{proof}
Let $\pr\{{\bar x}_1^{*}=0\}>0$. From the first equality in $(\ref{eq33})$ with $i=1$ we get
\begin{align}\label{eq39}
\frac{1}{t}(\ln x_1(t)-\ln x_{10})=\frac{1}{t}\int_0^tp_1(s)ds-\frac{1}{t}\int_0^t b_1(s)x_1(s)ds\nonumber\\-\frac{1}{t}\int_0^t\frac{c_1(s)x_2(s)}{m(s)+x_1(s)}ds+\frac{M_1(t)}{t}\nonumber\\
\ge \frac{1}{t}\int_0^tp_1(s)ds-\frac{b_{1\sup}}{t}\int_0^t x_1(s)ds-\frac{c_{1\sup}}{m_{\inf}t}\int_0^tx_2(s)ds+\frac{M_1(t)}{t}
\end{align}
where martingale $M_1(t)$ is defined in $(\ref{eq34})$. From definition of ${\bar p}^{*}_1$, strong law of large numbers for martingale $M_1(t)$ and Theorem \ref{thm2} for $x_2(t)$ we have $\forall\eps>0$, $\exists t_0\ge0$, $\exists \Omega_{\eps}\subset\Omega$ with $\pr(\Omega_{\eps})\ge1-\eps$, such that
\begin{align}
\frac{1}{t}\int\limits_0^t p_{1}(s)ds\ge{\bar p}^{*}_1-\frac{\eps}{3},\ \frac{M_1(t)}{t}\ge-\frac{\eps}{3},\
\frac{1}{t}\int_0^tx_2(s)ds\le\frac{\eps m_{\inf}}{3c_{1\sup}}, \forall t\ge t_0, \omega\in\Omega_{\eps}.
\nonumber
\end{align}
So, from $(\ref{eq39})$ we get for $\omega\in\{\omega\in\Omega|{\bar x}_1^{*}=0\}\cap\Omega_{\eps}$
\begin{align}
\limsup_{t\to\infty}\frac{\ln x_1(t)}{t}\ge {\bar p}_1^{*}-\eps>0
\nonumber
\end{align}
for sufficiently small $\eps>0$. Therefore
\begin{align}
\pr\left\{\omega\in\Omega|\ \limsup_{t\to\infty}\frac{\ln x_1(t)}{t}>0\right\}>0.
\nonumber
\end{align}
But from Corollary \ref{cor1}
\begin{align}
\pr\left\{\omega\in\Omega|\ \limsup_{t\to\infty}\frac{\ln x_1(t)}{t}\le0\right\}=1.
\nonumber
\end{align}
Therefore we have a contradiction.
\end{proof}









\begin{thebibliography}{99}
\bibitem{Aziz}
Aziz-Alaoui, M.A., Daher Okiye, M.:
Boundedness and global stability for a predator-prey model with modified Leslie-Gower and Holling-type II schemes,
Applied Mathematics Letters, no. 16, 1069--1075, (2003).

\bibitem{Ji1}
Ji, C.Y., Jiang, D.Q, Shi, N.Z:
Analysis of a predator-prey model with modified Leslie-Gower and Holling-type II schemes with stochastic perturbation,
Journal of Mathematical Analysis and Applications, 359, 482--498, (2009).

\bibitem{Ji2}
Ji, C.Y., Jiang, D.Q, Shi, N.Z.:
A note on a predator-prey model with modified Leslie-Gower and Holling-type II schemes with stochastic perturbation,
Journal of Mathematical Analysis and Applications, 377, 435--440, (2011).

\bibitem{Lin}
Lin, Y., Jiang, D.:
Long-time behavior of a stochastic predator-prey model with modified Leslie-Gower and Holling-type II schemes,
International Journal of Biomathematics, vol.9, no. 3, 1650039 (18p.),(2016).

\bibitem{Bor1}
Borysenko, O.D., Borysenko, D.O.:
Persistence and Extinction in Stochastic Nonautonomous
Logistic Model of Population Dynamics, Theory of Probability and Mathematical Statistics, {\bf 2(99)}, 63--70, (2018).

\bibitem{Bor2}
Borysenko, O.D., Borysenko, D.O.:
Asymptotic Behavior of the Solution to the
Non-Autonomous Stochastic Logistic Differential
Equation, Theory of Probability and Mathematical Statistics, {\bf 2(101)}, 40--48, (2019).

\bibitem{Bor3}
Borysenko, Olga, Borysenko, Oleksandr:
Stochastic two-species mutualism model with jumps, Modern Stochastics: Theory and Applications, 7(1), 1--15, (2020),
https://doi.org/10.15559/20-VMSTA150.

\bibitem{GikhSkor}
Gikhman, I.I., Skorokhod, A.V.:
{Stochastic Differential Equations and its Applications},
Kyiv, Naukova Dumka, (1982). (In Russian)

\bibitem{LiMao}
Li, X., Mao, X.:
{Population Dynamical Behavior of
Non-Autonomous Lotka-Volterra Competitive
System with Random Perturbation},
Discrete \& Continuous Dynamical Systems, {\bf24}, 523 -- 545, (2009).

\bibitem{Lip}
Lipster, R.:
{A strong law of large numbers for local martingales}, Stochastics, {\bf3}, 217--228, (1980).

\bibitem{Liu}
Liu, M., Wang, K.:
Persistence and extinction in stochastic non-autonomous logistic
systems, Journal of Mathematical Analysis and Applications {\bf 375}, 443--457, (2011).
\end{thebibliography}

\end{document}